\newcommand{\comment}[1]{}
\theoremstyle{theorem}
    \newtheorem{theorem}{Theorem}
    \newtheorem{lemma}[theorem]{Lemma}
    \newtheorem{proposition}[theorem]{Proposition}
    \newtheorem{corollary}[theorem]{Corollary}
\theoremstyle{definition} 
    \newtheorem{definition}[theorem]{Definition}
    \newtheorem{remark}[theorem]{Remark}
    \newtheorem{example}[theorem]{Example}
    \newtheorem{exercise}[theorem]{Exercise}
\def\<{\langle}
\def\>{\rangle}
\def\bar{\overline}
\newcommand\mnote[1]{} 
\newcommand\be{\begin{equation*}}
\newcommand\ee{\end{equation*}}
\newcommand\ben{\begin{equation}}
\newcommand\een{\end{equation}}
\newcommand\bes{\begin{eqnarray*}}
\newcommand\ees{\end{eqnarray*}}
\newcommand\bex{\begin{exercise}}
\newcommand\eex{\end{exercise}}
\newcommand\beg{\begin{example}}
\newcommand\eeg{\end{example}}
\newcommand\benu{\begin{enumerate}}
\newcommand\eenu{\end{enumerate}}
\newcommand\beit{\begin{itemize}}
\newcommand\eeit{\end{itemize}}
\newcommand\berk{\begin{remark}}
\newcommand\eerk{\end{remark}}
\newcommand\bdefn{\begin{defintion}}
\newcommand\edefn{\end{definition}}
\newcommand\bthm{\begin{theorem}}
\newcommand\ethm{\end{theorem}}
\newcommand\bprf{\begin{proof}}
\newcommand\eprf{\end{proof}}
\newcommand\blem{\begin{lemma}}
\newcommand\elem{\end{lemma}}
\newcommand{\sm}{{\raise0.3ex\hbox{$\scriptstyle \setminus$}}}
\def\CHI{\mathchoice%
{\raise2pt\hbox{$\chi$}}%
{\raise2pt\hbox{$\chi$}}%
{\raise1.3pt\hbox{$\scriptstyle\chi$}}%
{\raise0.8pt\hbox{$\scriptscriptstyle\chi$}}}
\def\smalloplus{\raise1pt\hbox{$\,\scriptstyle \oplus\;$}}
\numberwithin{equation}{section}
\begin{document}

\title[A Note on Tetrablock Contractions]{A Note on Tetrablock Contractions}
%
%
\author[Sau]{Haripada Sau}
\address{Department of Mathematics,\\
        Indian Institute of Science,\\
        Bangalore 560012, India}

\email{sau10@math.iisc.ernet.in}
\thanks{MSC2010: Primary:47A15, 47A20, 47A25, 47A45.}
\thanks{Key words and phrases: Tetrablock, Tetrablock contraction, Spectral set, Beurling-Lax-Halmos theorem, Functional model, Fundamental operator.}
\thanks{The author's research is supported by
University Grants Commission, India via DSA-SAP.}
\date{\today}
\maketitle

\begin{abstract}
A commuting triple of operators $(A,B,P)$ on a Hilbert space $\mathcal{H}$ is called a tetrablock contraction if the closure of the set
$$
E = \{\underline{x}=(x_1,x_2,x_3)\in \mathbb{C}^3:
 1-x_1z-x_2w+x_3zw \neq 0 \text{ whenever }|z| \leq 1\text{ and }|w| \leq 1 \}
$$
is a spectral set. In this paper, we have constructed a functional model and produced a complete unitary invariant for a pure tetrablock contraction. In this construction, the fundamental operators, which are the unique solutions of the operator equations
$$
A-B^*P = D_PX_1D_P \text{ and } B-A^*P=D_PX_2D_P, \text{ where $X_1,X_2 \in \mathcal{B}(\mathcal{D}_P)$},
$$
play a big role.
\\
As a corollary to the functional model, we show that every pure tetrablock isometry $(A,B,P)$ on a Hilbert space $\mathcal{H}$ is unitarily equivalent to $(M_{G_1^*+G_2z}, M_{G_2^*+G_1z},M_z)$ on $H^2_{\mathcal{D}_{P^*}}(\mathbb{D})$, where $G_1$ and $G_2$ are the fundamental operators of $(A^*,B^*,P^*)$.
\\
We prove a Beurling-Lax-Halmos type theorem for a triple of operators $(M_{F_1^*+F_2z},M_{F_2^*+F_1z},M_z)$, where $\mathcal{E}$ is a Hilbert space and $F_1,F_2 \in \mathcal{B}(\mathcal{E})$.
\\
We deal with a natural example of tetrablock contraction on functions space to find out its fundamental operators.
\end{abstract}

\section{introduction}
The set {\bf{tetrablock}} is defined as
$$
E = \{\underline{x}=(x_1,x_2,x_3)\in \mathbb{C}^3:
 1-x_1z-x_2w+x_3zw \neq 0 \text{ whenever }|z| \leq 1\text{ and }|w| \leq 1 \}.
$$
This domain was studied in \cite{awy} and \cite{awy-cor} for its geometric properties. Let $A(E)$ be the algebra of functions holomorphic in $E$ and continuous in $\bar{E}$. The distinguished boundary of $E$ (denoted by $b(E)$), i.e., the shilov boundary with respect to $A(E)$, is found in \cite{awy} and \cite{awy-cor} to be the set
$$
bE = \{\underline{x}=(x_1,x_2,x_3)\in \mathbb{C}^3: x_1=\bar{x_2}x_3, |x_3|=1 \text{ and } |x_2| \leq 1 \}.
$$ The operator theory on tetrablock was first developed in \cite{sir's tetrablock paper}.
\begin{definition}
A triple $(A,B,P)$ of commuting bounded operators on a Hilbert space $\mathcal{H}$ is called a {\em{tetrablock contraction}} if $\overline{E}$ is a spectral set for $(A,B,P)$, i.e. the Taylor joint spectrum of $(A,B,P)$ is contained in $\overline{E}$ and
$$
||f(A,B,P)|| \leq ||f||_{\infty,\overline{E}}=\text{sup}\{ |f(x_1,x_2,x_3)|:(x_1,x_2,x_3) \in \overline{E}\}
$$
for any polynomial $f$ in three variables.
\end{definition}

A {\em{tetrablock unitary}} is a commuting pair of normal operators $(A,B,P)$ such that its Taylor joint spectrum is contained in $bE$.

A {\em{tetrablock isometry}} is the restriction of a tetrablock unitary to a joint invariant subspace. See \cite{sir's tetrablock paper}, for several characterizations of a tetrablock unitary and a tetrablock isometry.

Consider a tetrablock contraction $(A,B,P)$. Then it is easy to see that $P$ is a contraction.
\\
Fundamental equations for a tetrablock contraction are introduced in \cite{sir's tetrablock paper}. And these are
\begin{eqnarray}\label{Maa12}
A-B^*P=D_PF_1D_P, \text{ and }  B-A^*P=D_PF_2D_P
\end{eqnarray}
where $D_P=(I-P^*P)^\frac{1}{2}$ is the defect operator of the contraction $P$ and $\mathcal{D}_P=\overline{Ran}D_P$ and where $F_1,F_2$ are bounded operators on $\mathcal{D}_P$.
Theorem 3.5 in \cite{sir's tetrablock paper} says that the two fundamental equations can be solved and the solutions $F_1$ and $F_2$ are unique. The unique solutions
$F_1$ and $F_2$ of equations (\ref{Maa12}) are called the {\em{fundamental operators}} of the tetrablock contraction $(A,B,P)$. Moreover, $w(F_1)$ and $w(F_2)$ are not greater than $1$, where $w(X)$, for a bounded operator $X$, denotes the numerical radius of $X$.

The adjoint triple $(A^*,B^*,P^*)$ is also a tetrablock contraction as can be seen from the definition. By what we stated above there are unique $G_1,G_2 \in \mathcal{B}(\mathcal{D}_{P^*})$ such that
\begin{eqnarray}\label{Maa13}
A^*-BP^*=D_{P^*}G_1D_{P^*} \text{ and } B^*-AP^*=D_{P^*}G_2D_{P^*}.
\end{eqnarray}
Moreover, $w(G_1)$ and $w(G_2)$ are not greater than $1$.

 In \cite{sir's tetrablock paper}(Theorem 6.1), it is showed that tetrablock is a complete spectral set under the conditions that $F_1$ and $F_2$ satisfy
 \begin{eqnarray}\label{Maa14}
[X_1, X_2] =0 \text{ and } [X_1, X_1^*] = [X_2, X_2^* ]
\end{eqnarray} in place of $X_1$ and $X_2$ respectively. Where $[X_1, X_2]$, for two bounded operators $X_1$ and $X_2$, denotes the commutator of $X_1$ and $X_2$, i.e., the operator $X_1X_2-X_2X_1$. In section 2, we show that if the contraction $P$ has dense range, then commutativity of the fundamental operators $F_1$ and $F_2$ is enough to have a dilation of the tetrablock contraction $(A,B,P)$. In fact, under the same hypothesis we show that $G_1$ and $G_2$ also satisfy (\ref{Maa14}), in place of $X_1$ and $X_2$ respectively. This is the content of Theorem \ref{sthm0}.

 For a Hilbert space $\mathcal{E}$, $H^2_{\mathcal{E}}(\mathbb{D})$ stands for the Hilbert space of $\mathcal{E}$-valued analytic functions on $\mathbb{D}$ with square summable Taylor series co-efficients about the point zero. The space $H^2_{\mathcal{E}}(\mathbb{D})$ is unitarily equivalent to the space $H^2(\mathbb{D}) \otimes \mathcal{E}$ via the map $z^n\xi \to z^n \otimes \xi $, for all $n \geq 0$ and $\xi \in \mathcal{E}$. We shall identify these unitarily equivalent
spaces and use them, without mention, interchangeably as per notational convenience

In \cite{beurling}, Beurling characterized invariant subspaces for the 'multiplication by $z$' operator on the Hardy space $H^2(\mathbb{D})$ of the unit disc. In \cite{lax}, Lax extended Beurling's result to the finite-dimensional vector space valued Hardy space. Then Halmos extended Lax's result to infinite-dimensional vector spaces in \cite{halmos}. The extended result is the following.
\begin{theorem}[Beurling-Lax-Halmos]
Let $0 \neq \mathcal{M}$ be a closed subspace of $H^2_{\mathcal{E}}(\mathbb{D})$. Then $\mathcal{M}$ is invariant under $M_z$ if and only if there exist a Hilbert space $\mathcal{E_*}$ and an inner function $(\mathcal{E_*}, \mathcal{E}, \Theta)$ such that $\mathcal{M} = \Theta H^2_{\mathcal{E_*}}(\mathbb{D})$.
\end{theorem}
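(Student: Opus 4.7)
The plan is to handle the two directions of the biconditional separately, with the forward implication being essentially immediate and the reverse direction reduced to the Wold decomposition of an isometry together with a representation of shift-intertwining operators as multipliers.

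For the easy direction, assume $\mathcal{M} = \Theta H^2_{\mathcal{E}_*}(\mathbb{D})$ with $\Theta$ an inner function. Since $\Theta$ is analytic, $M_\Theta$ commutes with $M_z$, so $M_z \mathcal{M} \subseteq \mathcal{M}$. Since $\Theta$ is inner, $M_\Theta$ is an isometry, so its range $\mathcal{M}$ is closed.

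For the substantive direction, the first step is to observe that the restriction $S := M_z|_\mathcal{M}$ is an isometry on $\mathcal{M}$, and moreover it is \emph{pure} (has no unitary part), because
$$\bigcap_{n\geq 0} S^n \mathcal{M} \subseteq \bigcap_{n\geq 0} z^n H^2_{\mathcal{E}}(\mathbb{D}) = \{0\}.$$
Next I would apply the Wold decomposition to $S$. Setting $\mathcal{W} := \mathcal{M} \ominus z\mathcal{M}$, this yields the orthogonal direct sum decomposition $\mathcal{M} = \bigoplus_{n\geq 0} z^n \mathcal{W}$. Take $\mathcal{E}_* := \mathcal{W}$ and define $U : H^2_{\mathcal{E}_*}(\mathbb{D}) \to \mathcal{M}$ on the algebraic direct sum by $U(\sum_{n\geq 0} z^n w_n) = \sum_{n\geq 0} z^n w_n$, where on the right each $w_n \in \mathcal{W}$ is viewed as an element of $H^2_{\mathcal{E}}(\mathbb{D})$. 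This $U$ extends to a unitary onto $\mathcal{M}$ and intertwines the two shift operators.

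The final step, which is the main technical hurdle, is to realise $U$ as multiplication by an inner function $\Theta \in H^\infty_{\mathcal{B}(\mathcal{E}_*,\mathcal{E})}(\mathbb{D})$. For $w \in \mathcal{W}$, regarded as a constant element of $H^2_{\mathcal{W}}(\mathbb{D})$, the element $Uw$ equals $w$ itself as a function in $H^2_{\mathcal{E}}(\mathbb{D})$; define
$$\Theta(\zeta)w := (Uw)(\zeta) = w(\zeta), \qquad \zeta \in \mathbb{D}, \; w \in \mathcal{W}.$$
Boundedness of $\Theta(\zeta)$ as an operator $\mathcal{W} \to \mathcal{E}$ is inherited from continuity of point evaluation on $H^2_{\mathcal{E}}(\mathbb{D})$, and analyticity from analyticity of the point-evaluated Hardy functions. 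For a general $f = \sum_{n\geq 0} z^n w_n \in H^2_{\mathcal{W}}(\mathbb{D})$, the intertwining relation $U M_z = M_z U$ combined with the definition of $\Theta$ gives
$$(Uf)(\zeta) = \sum_{n\geq 0} \zeta^n w_n(\zeta) = \sum_{n\geq 0} \zeta^n \Theta(\zeta) w_n = \Theta(\zeta) f(\zeta),$$
so $U = M_\Theta$. Since $U$ is an isometry, $M_\Theta$ is an isometry, which is precisely the statement that $\Theta$ is inner. Hence $\mathcal{M} = U H^2_{\mathcal{E}_*}(\mathbb{D}) = \Theta H^2_{\mathcal{E}_*}(\mathbb{D})$, as required. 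The most delicate point is justifying that shift-intertwining isometries between vector-valued Hardy spaces really are multiplication operators; the rest is an application of the Wold decomposition.
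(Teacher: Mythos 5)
The paper does not prove this statement; it is quoted as a classical background result with references to Beurling, Lax and Halmos, so there is no internal proof to compare against. Your argument is the standard one (and essentially Halmos's original one): the easy direction from analyticity and isometry of $M_\Theta$, and the hard direction via the Wold decomposition of the pure isometry $M_z|_{\mathcal{M}}$ with wandering subspace $\mathcal{W}=\mathcal{M}\ominus z\mathcal{M}$, followed by the identification of the shift-intertwining isometry $U$ as a multiplication operator. The argument is correct. The only point you pass over a little quickly is the claim that $\Theta\in H^\infty_{\mathcal{B}(\mathcal{E}_*,\mathcal{E})}(\mathbb{D})$: continuity of point evaluation only gives $\|\Theta(\zeta)\|\leq (1-|\zeta|^2)^{-1/2}$ for each fixed $\zeta$, not a uniform bound. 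The uniform contractivity $\sup_{\zeta}\|\Theta(\zeta)\|\leq 1$ comes from the multiplier norm: since $M_\Theta^*(k_\zeta\otimes\eta)=k_\zeta\otimes\Theta(\zeta)^*\eta$ for the Szeg\H{o} kernel $k_\zeta$, one gets $\|\Theta(\zeta)^*\|\leq\|M_\Theta\|=\|U\|=1$. With that standard remark inserted, the proof is complete.
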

In section 3, we prove a Beurling-Lax-Halmos type theorem for a triple of operators, which is the first main result of this paper. More explicitly, given a Hilbert space $\mathcal{E}$ and two bounded operators $F_1,F_2 \in \mathcal{B}(\mathcal{E})$, we shall see that a non-zero closed
subspace $\mathcal{M}$ of $H^2_{\mathcal{E}}(\mathbb{D})$ is invariant under $(M_{F_1^*+F_2z},M_{F_2^*+F_1z},M_z)$ if and only if
\begin{eqnarray*}
&&(F_1^*+F_2z)\Theta(z)=\Theta(z)(G_1+G_2^*z)
\\
&&(F_2^*+F_1z)\Theta(z)=\Theta(z)(G_2+G_1^*z), \text{ for all }z \in \mathbb{D}
\end{eqnarray*}
for some unique $G_1,G_2 \in \mathcal{B}(\mathcal{E}_{*})$, where $(\mathcal{E}_{*},\mathcal{E},\Theta)$ is the Beurling-Lax-Halmos representation of $\mathcal{M}$.
Along the way we shall see that if $F_1$ and $F_2$ are such that $(M_{F_1^*+F_2z},M_{F_2^*+F_1z},M_z)$ on $H^2(\mathcal{E})$ is a tetrablock isometry, then $(M_{G_1+G_2^*z},M_{G_2+G_1^*z},M_z)$ is also a tetrablock isometry on $H^2(\mathcal{E_{*}})$. This is the content of Theorem \ref{sthm1}.

A contraction $P$ on a Hilbert space $\mathcal{H}$ is called {\em{pure}} if ${P^*}^n \to 0$ strongly, i.e., $\Vert{P^*}^nh\rVert^2 \to 0$, for all $h \in \mathcal{H}$. A contraction $P$ is called {\em{completely-non-unitary}} (c.n.u.) if it has no reducing sub-spaces on which its restriction is unitary. A tetrablock contraction $(A,B,P)$ is called {\em{pure tetrablock contraction}} if the contraction $P$ is pure.

Section 4 gives a functional model of pure tetrablock contractions, the second main result of this paper. We shall see that if $(A,B,P)$ is a pure tetrablock contraction on a Hilbert space $\mathcal{H}$, then the operators $A,B$ and $P$ are unitarily equivalent to $P_{\mathcal{H}_P}(I \otimes G_1^* + M_z \otimes G_2)|_{\mathcal{H}_P}, P_{\mathcal{H}_P}(I \otimes G_2^* + M_z \otimes G_1)|_{\mathcal{H}_P}$ and $P_{\mathcal{H}_P}(M_z \otimes I_{\mathcal{D}_{P^*}})|_{\mathcal{H}_P}$ respectively, where $G_1$ and $G_2$ are fundamental operators of $(A^*,B^*,P^*)$ and $\mathcal{H}_P$ is the model space of a pure contraction $P$, as in \cite{Nagy-Foias}. This is the content of Theorem \ref{fm}.

Two equations associated with a contraction $P$ and its defect operators that have been known from the time of Sz.-Nagy and that will come handy are
\begin{eqnarray}\label{Maa8}
PD_P=D_{P^*}P
\end{eqnarray}
and its corresponding adjoint relation
\begin{eqnarray}\label{Maa10}
D_PP^*=P^*D_{P^*}.
\end{eqnarray}
Proof of (\ref{Maa8}) and (\ref{Maa10}) can be found in \cite{Nagy-Foias}(ch. 1, sec. 3).

For a contraction $P$, the {\em{characteristic function}} $\Theta_P$ is defined by
\begin{eqnarray}\label{char}
\Theta_P(z)=[-P+zD_{P^*}(I_\mathcal{H}-zP^*)^{-1}D_P]|_{\mathcal{D}_P}, \text{ for all $z \in \mathbb{D}$}.
\end{eqnarray}
By virtue of (\ref{Maa8}), it follows that, for each $z \in \mathbb{D}$, the operator $\Theta_P(z)$ is an operator from $\mathcal{D}_P$ to $\mathcal{D}_{P^*}$.

In \cite{Nagy-Foias}, Sz.-Nagy and Foias developed the model theory for c.n.u. contractions and found a set of unitary invariants. The set is a singleton set and consists of the characteristic function of the contraction. In section 5, we produce a set of unitary invariants for a pure tetrablock contraction $(A,B,P)$. In this case the set of unitary invariants consists of three members, the characteristic function of $P$ and the two fundamental operators of $(A^*,B^*,P^*)$. This (Theorem \ref{unitary inv}) is the second major result of this paper. The result states that for two pure tetrablock contractions $(A,B,P)$ and $(A',B',P')$ to be unitary equivalent it is necessary and sufficient that the characteristic functions of $P$ and $P'$ coincide and the fundamental operators $(G_1,G_2)$ and $(G'_1,G'_2)$ of $(A,B,P)$ and $(A',B',P')$ respectively, are unitary equivalent by the same unitary that is involved in the coincidence of the characteristic functions of $P$ and $P'$.

It is very hard to compute the fundamental operators of a tetrablock contraction, in general. We now know how important the role of the fundamental operators is to find the functional model of pure tetrablock contractions. So it is important to have a concrete example of fundamental operators and grasp the above model theory by dealing with them. That is exactly what Section 6 does. In other words, we find the fundamental operators $(G_1,G_2)$ of the adjoint of a pure tetrablock isometry $(A,B,P)$ and the unitary operator which unitarizes $(A,B,P)$ to the pure tetrablock isometry $(M_{G_1^*+G_2z}, M_{G_2^*+G_1z},M_z)$ on $H^2_{\mathcal{D}_{P^*}}(\mathbb{D})$.

\section{relations between fundamental operators}
In this section, we prove some important relations between fundamental operators of a tetrablock contraction. But before going to state and proof the main theorem of this section, we have to recall two lemmas, which were proved originally in \cite{sir's tetrablock paper}.
\begin{lemma}\label{tetralem6}
Let $(A,B,P)$ be a tetrablock contraction with commuting fundamental operators $F_1$ and $F_2$. Then
$$
A^*A-B^*B = D_P(F_1^*F_1-F_2^*F_2)D_P.
$$
\end{lemma}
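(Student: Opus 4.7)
My plan is to derive the identity by direct algebraic manipulation of the two fundamental equations. First, I would substitute $A = B^*P + D_PF_1D_P$ and its adjoint $A^* = P^*B + D_PF_1^*D_P$, together with the analogous expressions for $B$ and $B^*$, to expand both $A^*A$ and $B^*B$ into four summands each. Subtracting and using the commutativity $A^*B^* = B^*A^*$ (which follows from $AB = BA$) lets the mixed terms involving $A^*B^*P$ and $B^*A^*P$ cancel, while the substitutions $B^*P = A - D_PF_1D_P$ and $A^*P = B - D_PF_2D_P$ taken from the fundamental equations themselves convert $P^*B\cdot D_PF_1D_P - P^*A\cdot D_PF_2D_P$ and $D_PF_1^*D_P\cdot B^*P - D_PF_2^*D_P\cdot A^*P$ each into $P^*(AA^* - BB^*)P$ plus further commutators that vanish. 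Collecting terms I expect to arrive at the intermediate identity
\begin{equation*}
A^*A - B^*B \;=\; P^*(AA^* - BB^*)P \;+\; D_P(F_1^*D_P^2F_1 - F_2^*D_P^2F_2)D_P. \qquad (\star)
\end{equation*}
Up to this point the argument uses only the commutativity of the triple $(A,B,P)$; the hypothesis $F_1F_2 = F_2F_1$ has not yet entered.

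The decisive step, in which $F_1F_2 = F_2F_1$ enters, is to establish
\begin{equation*}
P^*(AA^*-BB^*)P \;=\; D_P(F_1^*P^*PF_1 - F_2^*P^*PF_2)D_P; \qquad (\star\star)
\end{equation*}
then, since $I = D_P^2 + P^*P$, adding $(\star)$ and $(\star\star)$ collapses the right-hand side to $D_P(F_1^*F_1 - F_2^*F_2)D_P$, which is what the lemma asserts. To prove $(\star\star)$, I would compute the commutator $[D_PF_1D_P,\ D_PF_2D_P]$ in two ways. On the fundamental-operator side, expanding $D_P^2 = I - P^*P$ and using $F_1F_2 = F_2F_1$ reduces the commutator to $D_P(F_2P^*PF_1 - F_1P^*PF_2)D_P$. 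On the operator side, substituting the fundamental equations writes the same commutator as $(A - B^*P)(B - A^*P) - (B - A^*P)(A - B^*P)$, which, using $AB = BA$, $PA = AP$, $PB = BP$, expands to
\begin{equation*}
\bigl[(A^*A - B^*B) - (AA^* - BB^*)\bigr]P \;+\; (B^*PA^*P - A^*PB^*P).
\end{equation*}

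Equating these two expressions and combining the resulting identity with its adjoint, together with the rearrangements $P^*A = B^* - D_PF_2^*D_P$ and $P^*B = A^* - D_PF_1^*D_P$ (adjoints of the fundamental equations) to handle the awkward term $B^*PA^*P - A^*PB^*P$, should isolate $(\star\star)$. The main obstacle is precisely this extraction: the stray term $B^*PA^*P - A^*PB^*P$ does not vanish from the commutativity of the triple alone, since $P$ need not commute with $A^*$ or $B^*$, and identifying the correct combination of the commutator identity with its adjoint that eliminates this term while leaving $(\star\star)$ is the crux of the argument.
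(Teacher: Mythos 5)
Your reduction is sound as far as it goes: the identity $(\star)$ is indeed true and follows from the commutativity of the triple alone (write $A^*A-B^*B=D_PF_1^*D_P\,A-D_PF_2^*D_P\,B$ using $A^*B^*=B^*A^*$, then substitute $A=B^*P+D_PF_1D_P$, $B=A^*P+D_PF_2D_P$ and $D_PF_1^*D_P=A^*-P^*B$, $D_PF_2^*D_P=B^*-P^*A$), and since $D_P^2+P^*P=I$, the lemma is equivalent, given $(\star)$, to $(\star\star)$. The gap is in your proposed proof of $(\star\star)$, and it is not a technicality you have merely postponed: the route through $[D_PF_1D_P,\,D_PF_2D_P]$ cannot reach $(\star\star)$. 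On the fundamental-operator side that commutator produces only \emph{mixed} terms of the form $F_2P^*PF_1-F_1P^*PF_2$ (and, in the adjoint, $F_1^*P^*PF_2^*-F_2^*P^*PF_1^*$), never the diagonal quantity $F_1^*P^*PF_1-F_2^*P^*PF_2$ that $(\star\star)$ requires. On the operator side, the stray term $B^*PA^*P-A^*PB^*P$ can only be attacked by substituting the fundamental equations back in, and if you do so you find $B^*PA^*P-A^*PB^*P=[D_PF_2D_P,A]+[B,D_PF_1D_P]+[D_PF_1D_P,D_PF_2D_P]$, which turns your commutator identity into the tautology $0=0$. No combination of that identity with its adjoint produces $F_1^*(\cdot)F_1$ terms, because $P^*P$ sits between $F_1^*$ and $F_1$ with no $D_P$ in between, so the expression is not a product of blocks of the form $D_PF_iD_P$; you need a factorization of $PF_1D_P$ itself, which the fundamental equations alone do not give you.

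The missing ingredient is precisely Lemma \ref{tetra} of the paper, $D_PA=F_1D_P+F_2^*D_PP$ and $D_PB=F_2D_P+F_1^*D_PP$ (itself derived from the fundamental equations by multiplying by $D_P$ and using that both sides map into $\mathcal{D}_P$). With it the lemma is a three-line computation that bypasses $(\star)$ and $(\star\star)$ entirely: from the fundamental equations and $A^*B^*=B^*A^*$ one gets $A^*A-B^*B=A^*D_PF_1D_P-B^*D_PF_2D_P$; then $A^*D_P=D_PF_1^*+P^*D_PF_2$ and $B^*D_P=D_PF_2^*+P^*D_PF_1$ give $A^*A-B^*B=D_P(F_1^*F_1-F_2^*F_2)D_P+P^*D_P[F_2,F_1]D_P$, and the last term vanishes by the hypothesis $[F_1,F_2]=0$. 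Note that the paper itself does not prove this lemma; it is recalled from \cite{sir's tetrablock paper}, where the argument is essentially the one just sketched.
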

\begin{lemma}\label{tetra}
The fundamental operators $F_1$ and $F_2$ of a tetrablock contraction $(A,B,P)$
are the unique bounded linear operators on $\mathcal{D}_P$ that satisfy the pair
of operator equations
\begin{eqnarray*}
D_PA = X_1D_P + X_2^*D_PP \text{ and } D_P B = X_2D_P + X_1^*D_PP.
\end{eqnarray*}
\end{lemma}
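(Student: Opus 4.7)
The plan is to verify that the pair $(F_1,F_2)$ supplied by the fundamental equations (\ref{Maa12}) satisfies the two new equations, and then to establish uniqueness by a contractivity/iteration argument that bypasses the lack of a left inverse for $D_P$.

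For existence, set $T := D_P A - F_1 D_P - F_2^* D_P P$. Since $F_1,F_2 \in \mathcal{B}(\mathcal{D}_P)$ and $\mathrm{ran}(D_P) \subset \mathcal{D}_P$, every summand maps $\mathcal{H}$ into $\mathcal{D}_P$, so $\mathrm{ran}(T) \subset \mathcal{D}_P$. Left-multiplying by $D_P$ and substituting $D_P F_1 D_P = A - B^*P$ from (\ref{Maa12}) and $D_P F_2^* D_P = B^* - P^*A$ from its adjoint yields
\begin{align*}
D_P T &= (I - P^*P)A - (A - B^*P) - (B^* - P^*A)P \\
      &= P^*(AP - PA) = 0,
\end{align*}
where the last step uses $AP = PA$. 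Hence $\mathrm{ran}(T) \subset \mathcal{D}_P \cap \ker D_P$. Because $D_P$ is self-adjoint, $\ker D_P = \mathcal{D}_P^\perp$, so this intersection is $\{0\}$ and therefore $T = 0$. The identity $D_P B = F_2 D_P + F_1^* D_P P$ follows by the symmetric computation, exchanging the roles of $(A,F_1)$ and $(B,F_2)$ and using $BP = PB$.

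For uniqueness, suppose $X_1,X_2 \in \mathcal{B}(\mathcal{D}_P)$ also satisfy the pair. Put $Y_i := X_i - F_i$; subtracting from the system satisfied by $F_1,F_2$ yields the homogeneous pair $Y_1 D_P h + Y_2^* D_P P h = 0$ and $Y_2 D_P h + Y_1^* D_P P h = 0$ for every $h \in \mathcal{H}$. Pairing the first against $D_P h'$ and using the second equation with $h'$ to eliminate $Y_2$, a short manipulation gives
\[
\langle Y_1 D_P h, D_P h'\rangle = \langle Y_1 D_P P h, D_P P h'\rangle, \quad h,h' \in \mathcal{H}.
\]
Iterating $n$ times,
\[
\langle Y_1 D_P h, D_P h'\rangle = \langle Y_1 D_P P^n h, D_P P^n h'\rangle.
\]
Since $P$ is a contraction, $\|D_P P^n h\|^2 = \|P^n h\|^2 - \|P^{n+1} h\|^2 \to 0$ as $n \to \infty$ (the sequence $\{\|P^n h\|^2\}$ is bounded and monotone decreasing), so the right-hand side vanishes in the limit and we conclude $\langle Y_1 D_P h, D_P h'\rangle = 0$ for all $h,h'$. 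Density of $\{D_P h : h \in \mathcal{H}\}$ in $\mathcal{D}_P$ then forces $Y_1 = 0$, and symmetrically $Y_2 = 0$.

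The main obstacle is the uniqueness step: the tempting route of left-multiplying the new equations by $D_P$ to recover (\ref{Maa12}) leaves an extraneous $P^*PA$ term that cannot be cancelled without inverting $D_P$. The telescoping-plus-contractivity argument above circumvents this precisely because the asymptotic decay of $\|D_P P^n h\|$ collapses the iterated inner product.
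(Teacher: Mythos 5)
Your proof is correct. Note that the paper does not actually prove this lemma --- it is recalled from the reference \cite{sir's tetrablock paper} without proof --- but your argument is essentially the standard one from that source: existence by verifying that the defect of the identity is annihilated by $D_P$ while having range in $\mathcal{D}_P$ (using the fundamental equations, their adjoints, and commutativity of $A$, $B$ with $P$), and uniqueness by the telescoping identity $\langle Y_1D_Ph,D_Ph'\rangle=\langle Y_1D_PP^nh,D_PP^nh'\rangle$ together with $\|D_PP^nh\|\to 0$, which correctly sidesteps the non-invertibility of $D_P$.
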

Now we state and prove three relations between the fundamental operators of a tetrablock contraction, which will be used later in this paper.
\begin{lemma}\label{tetralem2}
Let $(A,B,P)$ be a tetrablock contraction on a Hilbert space $\mathcal{H}$ and $F_1,F_2$ and $G_1,G_2$ be fundamental operators of $(A,B,P)$ and $(A^*,B^*,P^*)$ respectively. Then
$$
D_PF_1=(AD_P-D_{P^*}G_2P)|_{\mathcal{D}_P} \text{ and } D_PF_2=(BD_{P}-D_{P^*}G_1P)|_{\mathcal{D_P}}.
$$
\end{lemma}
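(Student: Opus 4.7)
My plan is to verify the two identities after post-composing with $D_P$, which converts the statement into an equality on all of $\mathcal{H}$ between bounded operators built directly from the defining fundamental equations \eqref{Maa12} and \eqref{Maa13} together with the Sz.-Nagy intertwining $PD_P=D_{P^*}P$ from \eqref{Maa8}. Once I have $D_PF_1D_P=(AD_P-D_{P^*}G_2P)D_P$ as an equality of bounded operators on $\mathcal{H}$, the claim $D_PF_1=(AD_P-D_{P^*}G_2P)|_{\mathcal{D}_P}$ follows because both sides are bounded linear maps from $\mathcal{D}_P$ into $\mathcal{H}$ that agree on $\overline{\mathrm{Ran}}\,D_P=\mathcal{D}_P$. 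The second identity is handled identically by swapping the roles of $F_1\leftrightarrow F_2$ and $G_1\leftrightarrow G_2$.

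For the first computation I would start from the right-hand side. Using $D_P^2=I-P^*P$, I expand
\begin{eqnarray*}
(AD_P-D_{P^*}G_2P)D_P
&=& A(I-P^*P)-D_{P^*}G_2(PD_P)\\
&=& A-AP^*P-D_{P^*}G_2 D_{P^*}P,
\end{eqnarray*}
where the second line uses \eqref{Maa8}. Now the second fundamental equation in \eqref{Maa13} gives $D_{P^*}G_2D_{P^*}=B^*-AP^*$, so the expression becomes $A-AP^*P-(B^*-AP^*)P=A-B^*P$, which is exactly $D_PF_1D_P$ by the first equation in \eqref{Maa12}. Thus $D_PF_1D_P=(AD_P-D_{P^*}G_2P)D_P$ on $\mathcal{H}$, and the density argument above finishes this half.

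For the second identity, the same manipulation yields
\begin{eqnarray*}
(BD_P-D_{P^*}G_1P)D_P &=& B-BP^*P-D_{P^*}G_1D_{P^*}P\\
&=& B-BP^*P-(A^*-BP^*)P \;=\; B-A^*P \;=\; D_PF_2D_P,
\end{eqnarray*}
using this time the first equation of \eqref{Maa13} to substitute $D_{P^*}G_1D_{P^*}=A^*-BP^*$ and the second equation of \eqref{Maa12} for the final step. Restriction to $\mathcal{D}_P$ and density of $\mathrm{Ran}\,D_P$ give the stated equality.

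I do not foresee a genuine obstacle here: the proof is a direct manipulation. The only subtle point is the justification for dropping $D_P$ from the right of both sides, which rests on the fact that $D_PF_1$ and $(AD_P-D_{P^*}G_2P)|_{\mathcal{D}_P}$ are, by construction, bounded operators whose common domain $\mathcal{D}_P$ is the closure of $\mathrm{Ran}\,D_P$; I would make that one-line observation explicitly in the write-up.
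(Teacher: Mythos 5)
Your proof is correct and is essentially identical to the paper's: both expand $(AD_P-D_{P^*}G_2P)D_P$ using $D_P^2=I-P^*P$ and the intertwining $PD_P=D_{P^*}P$, substitute $D_{P^*}G_2D_{P^*}=B^*-AP^*$, and reduce to $A-B^*P=D_PF_1D_P$. The only difference is cosmetic — you make the density-of-$\mathrm{Ran}\,D_P$ justification explicit, which the paper leaves implicit.
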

\begin{proof}
We shall prove only one of the above, proof of the other is similar. For $h \in \mathcal{H}$, we have
\begin{eqnarray*}
(AD_P-D_{P^*}G_2P)D_Ph
&=&
A(I-P^*P)h-(D_{P^*}G_2D_{P^*})Ph
\\
&=&
Ah-AP^*Ph-(B^*-AP^*)Ph
\\
&=&
Ah-AP^*Ph-B^*Ph+AP^*Ph
\\
&=&(A-B^*P)h=(D_P F_1)D_Ph.
\end{eqnarray*}
Hence the proof.
\end{proof}
\begin{lemma}\label{tetralem4}
Let (A,B,P) be a tetrablock contraction on a Hilbert space $\mathcal{H}$ and $F_1, F_2$ and $G_1,G_2$ be fundamental operators of $(A,B,P)$ and $(A^*,B^*,P^*)$ respectively. Then
$$
PF_i=G_i^*P|_{\mathcal{D}_P}, \text{ for $i$=$1$ and $2$}.
$$
\end{lemma}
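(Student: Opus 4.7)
The plan is to derive each identity directly from the two sets of fundamental equations. I will do only the case $i=1$; the case $i=2$ is completely parallel, exchanging $(F_1,G_1)$ with $(F_2,G_2)$ and $(A-B^{*}P)$ with $(B-A^{*}P)$.

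First I would start from the defining equation
\[
A - B^{*}P \;=\; D_P F_1 D_P,
\]
multiply by $P$ on the left, and use the standard intertwining relation $PD_P = D_{P^{*}}P$ from \eqref{Maa8} to move the $P$ past the outer $D_P$:
\[
PA - PB^{*}P \;=\; D_{P^{*}} P F_1 D_P.
\]

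Next I would rewrite the left-hand side so that the operator $G_1$ (or really $G_1^{*}$) enters. The trick is to take the adjoint of the second identity in \eqref{Maa13}, namely
\[
A \;=\; PB^{*} + D_{P^{*}} G_1^{*} D_{P^{*}},
\]
which gives $PB^{*} = A - D_{P^{*}} G_1^{*} D_{P^{*}}$. Multiplying on the right by $P$ and using $AP=PA$ (from the commutativity of the triple $(A,B,P)$) together with $D_{P^{*}}P = PD_P$ produces
\[
PA - PB^{*}P \;=\; AP - AP + D_{P^{*}} G_1^{*} D_{P^{*}} P \;=\; D_{P^{*}} G_1^{*} P D_P.
\]
Combining the two displays yields
\[
D_{P^{*}}\bigl(P F_1 - G_1^{*} P\bigr) D_P \;=\; 0 \quad\text{on }\mathcal{H}.
\]

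The final step is a cancellation argument at both ends. On the right, since $\mathcal{D}_P = \overline{\mathrm{Ran}}\,D_P$, it suffices to check the identity on $\mathrm{Ran}(D_P)$ and extend by continuity. On the left, for any $h\in\mathcal{H}$ the element $F_1 D_P h$ lies in $\mathcal{D}_P$, and since $P\mathcal{D}_P\subset\mathcal{D}_{P^{*}}$ (again by \eqref{Maa8}), both $PF_1 D_P h$ and $G_1^{*} P D_P h = G_1^{*} D_{P^{*}} P h$ lie in $\mathcal{D}_{P^{*}}$; because $D_{P^{*}}$ is injective on $\mathcal{D}_{P^{*}} = \overline{\mathrm{Ran}}\,D_{P^{*}}$, we may cancel the leftmost $D_{P^{*}}$ to conclude $PF_1 = G_1^{*}P$ on $\mathcal{D}_P$. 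The entire scheme is a clean substitution game; the only subtle point — and the one I would flag as the main thing to get right — is the bookkeeping that both sides of the equality actually take values in $\mathcal{D}_{P^{*}}$, so that the injectivity of $D_{P^{*}}$ on $\mathcal{D}_{P^{*}}$ can legitimately be invoked to strip off the outer defect operator.
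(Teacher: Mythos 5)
Your proposal is correct and follows essentially the same route as the paper: both arguments reduce the claim to the identity $D_{P^*}(PF_1-G_1^*P)D_P=0$ via the fundamental equations $A-B^*P=D_PF_1D_P$ and $A-PB^*=D_{P^*}G_1^*D_{P^*}$, the intertwining relation $PD_P=D_{P^*}P$, and the commutativity $PA=AP$. The only cosmetic difference is that the paper strips the outer defect operators by testing against the dense sets $D_P\mathcal{H}$ and $D_{P^*}\mathcal{H}$ in an inner product, whereas you invoke the injectivity of $D_{P^*}$ on $\mathcal{D}_{P^*}$ — the same fact in dual form — and you correctly flag the range bookkeeping that makes this legitimate.
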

\begin{proof}
We shall prove only for $i=1$, the proof for $i=2$ is similar. Note that the operators on both sides are from $\mathcal{D}_P$ to $\mathcal{D}_{P^*}$.
Let $h,h' \in \mathcal{H}$ be any element. Then
\begin{eqnarray*}
&&\langle (PF_1-G_1^*P)D_Ph, D_{P^*}h' \rangle
\\
&=&
\langle D_{P^*}PF_1D_Ph,h' \rangle- \langle D_{P^*}G_1^*PD_{P}h,h' \rangle
\\
&=&
\langle P(D_P F_1D_P)h,h' \rangle - \langle (D_{P^*}G_1^*D_{P^*})Ph,h'\rangle
\\
&=&
\langle P(A-B^*P)h,h' \rangle - \langle (A-PB^*)Ph,h' \rangle
\\
&=&
\langle (PA-PB^*P-AP+PB^*P)h,h' \rangle =0.
\end{eqnarray*}
Hence the proof.
\end{proof}
\begin{lemma}\label{tetralem3}
Let $(A,B,P)$ be a tetrablock contraction on a Hilbert space $\mathcal{H}$ and $F_1,F_2$ and $G_1,G_2$ be fundamental operators of $(A,B,P)$ and $(A^*,B^*,P^*)$ respectively. Then
\begin{eqnarray*}
&&(F_1^*D_PD_{P^*}-F_2P^*)|_{\mathcal{D}_{P^*}}=D_PD_{P^*}G_1-P^*G_2^* \text{ and }
\\
&&(F_2^*D_PD_{P^*}-F_1P^*)|_{\mathcal{D}_{P^*}}=D_PD_{P^*}G_2-P^*G_1^*.
\end{eqnarray*}
\end{lemma}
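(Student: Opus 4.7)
The plan is to verify the first identity by pairing both operators with test vectors of the form $D_{P^*}h'$ in the domain and $D_Ph$ in the range, for arbitrary $h,h'\in\mathcal{H}$; since $D_{P^*}\mathcal{H}$ is dense in $\mathcal{D}_{P^*}$ and $D_P\mathcal{H}$ is dense in $\mathcal{D}_P$, the resulting scalar identity will imply the operator identity. The scalar identity is checked term by term using the fundamental equations (\ref{Maa12})--(\ref{Maa13}), the Sz.-Nagy intertwinings (\ref{Maa8})--(\ref{Maa10}) and the commutativity built into the definition of a tetrablock contraction. The second identity then follows by the symmetric argument, with $(F_1,G_1)$, $(F_2,G_2)$ and $A,B$ interchanged.

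For the left hand side I would first write $\langle F_1^*D_P D_{P^*}^2 h', D_P h\rangle = \langle D_{P^*}^2 h', (D_P F_1 D_P)h\rangle = \langle (I-PP^*)h', (A-B^*P)h\rangle$, using (\ref{Maa12}), and then $\langle F_2 P^* D_{P^*} h', D_P h\rangle = \langle F_2 D_P P^* h', D_P h\rangle = \langle (B-A^*P)P^* h', h\rangle$, using (\ref{Maa10}) and (\ref{Maa12}). Expanding and collecting yields
\[
\langle A^*h',h\rangle - \langle P^*Bh',h\rangle - \langle BP^*h',h\rangle + \langle P^*BPP^*h',h\rangle.
\]

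For the right hand side, the term $\langle D_P D_{P^*} G_1 D_{P^*} h', D_P h\rangle$ reduces by (\ref{Maa13}) to $\langle (A^*-BP^*)h', (I-P^*P)h\rangle$, while (\ref{Maa8}) together with the adjoint form of (\ref{Maa13}) give
\[
\langle P^* G_2^* D_{P^*} h', D_P h\rangle = \langle G_2^* D_{P^*} h', D_{P^*} P h\rangle = \langle (B-PA^*) h', Ph\rangle.
\]
Expanding yields
\[
\langle A^*h',h\rangle - \langle BP^*h',h\rangle - \langle P^*Bh',h\rangle + \langle P^*PBP^*h',h\rangle.
\]

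Thus the two sides agree precisely when $P^*BPP^* = P^*PBP^*$, which is forced by $BP = PB$. The step requiring attention is making sure every factor lands in the correct defect space so that (\ref{Maa8}) and (\ref{Maa10}) are applied on the right side; the small subtlety is that the two seemingly different ``quartic'' terms reconcile exactly because of the commutativity assumption in the definition of a tetrablock contraction. The second identity is proved by exactly the same bookkeeping with the roles of $F_1,G_1$ and $F_2,G_2$ (and of $A$ and $B$) interchanged, with $AP = PA$ playing the role that $BP = PB$ played here.
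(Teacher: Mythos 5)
Your computation is correct, and it reaches the identity by a genuinely different route than the paper. The paper's proof is a chain of vector identities applied to $D_{P^*}h$: it first rewrites $F_1^*D_Ph-F_1^*D_PPP^*h-F_2D_PP^*h$ as $F_1^*D_Ph - D_PBP^*h$ via Lemma \ref{tetra} (the characterization $D_PB=F_2D_P+F_1^*D_PP$), and then substitutes the adjoint of Lemma \ref{tetralem2}, namely $F_1^*D_P=D_PA^*-P^*G_2^*D_{P^*}$, to land directly on $D_PD_{P^*}G_1D_{P^*}h-P^*G_2^*D_{P^*}h$. You instead bypass both of those auxiliary lemmas, pair each side against the two dense sets $D_{P^*}\mathcal{H}$ and $D_P\mathcal{H}$, and reduce everything to the raw fundamental equations (\ref{Maa12})--(\ref{Maa13}) together with the intertwinings (\ref{Maa8})--(\ref{Maa10}); the two expansions then agree because $P^*BPP^*=P^*PBP^*$, i.e.\ because $BP=PB$. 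I verified both expansions and the density argument: both sides do map $\mathcal{D}_{P^*}$ into $\mathcal{D}_P$ (for $P^*G_2^*$ this uses $P^*D_{P^*}=D_PP^*$ again), so testing against $D_P\mathcal{H}$ suffices. What your approach buys is self-containedness --- it needs only the defining equations and makes explicit that commutativity of $B$ and $P$ is the ultimate source of the identity, which is invisible in the paper's version (where it is hidden inside the proofs of Lemmas \ref{tetra} and \ref{tetralem2}). What the paper's approach buys is a shorter, purely algebraic derivation that produces the stronger vector identity in one pass and reuses lemmas already established for other purposes. Either proof is acceptable.
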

\begin{proof}
For $h \in \mathcal{H}$, we have
\begin{eqnarray*}
&&(F_1^*D_PD_{P^*}-F_2P^*)D_{P^*}h
\\
&=&
F_1^*D_P(I-PP^*)h-F_2P^*D_{P^*}h
\\
&=&
F_1^*D_Ph-F_1^*D_PPP^*h-F_2D_PP^*h
\\
&=&
F_1^*D_Ph-(F_1^*D_PP+F_2D_P)P^*h
\\
&=&
F_1^*D_Ph-D_PBP^*h\;\;\;\;\;\;\;\;\;\;\;\;[\text{ by Lemma }(\ref{tetra})]
\\
&=&
(AD_P-D_{P^*}G_2P)^*h-D_PBP^*h \;\;\;\;[\text{by Lemma \ref{tetralem2}}]
\\
&=&
D_PA^*h-P^*G_2^*D_{P^*}h-D_PBP^*h
\\
&=&
D_P(A^*-BP^*)h-P^*G_2^*D_{P^*}h
\\
&=&
D_PD_{P^*}G_1D_{P^*}h-P^*G_2^*D_{P^*}h
\\
&=&
(D_PD_{P^*}G_1-P^*G_2^*)D_{P^*}h.
\end{eqnarray*}
Proof of the other relation is similar and hence is skipped.
Hence the proof.
\end{proof}
Now we prove the main result of this section.
\begin{theorem}\label{sthm0}
Let $F_1$ and $F_2$ be fundamental operators of a tetrablock contraction $(A,B,P)$ on a Hilbert space $\mathcal{H}$. And let $G_1$ and $G_2$ be fundamental operators of the tetrablock contraction $(A^*,B^*,P^*)$. If $[F_1,F_2]=0$ and $P$ has dense range, then
\begin{enumerate}
\item[(i)] $[F_1,F_1^*]=[F_2,F_2^*]$
\item[(ii)] $[G_1,G_2]=0$ and
\item[(iii)] $[G_1,G_1^*]=[G_2,G_2^*]$.
\end{enumerate}
\begin{proof}
\begin{enumerate}
\item[(i)] From Lemma \ref{tetra} we have $D_PA = F_1D_P + F_2^*D_PP $. This gives after multiplying $F_2$ from left in both sides,
\begin{eqnarray*}
&&F_2D_PA=F_2F_1D_P + F_2F_2^*D_PP
\\
&\Rightarrow& D_PF_2D_PA=D_PF_2F_1D_P + D_PF_2F_2^*D_PP
\\
&\Rightarrow& (B-A^*P)A=D_PF_2F_1D_P + D_PF_2F_2^*D_PP
\\
&\Rightarrow& BA-A^*AP = D_PF_2F_1D_P + D_PF_2F_2^*D_PP.
\end{eqnarray*}
Similarly, multiplying $F_1$ from left in both sides of $ D_P B = F_2D_P + F_1^*D_PP$ and proceeding as above we get
$AB-B^*BP=D_PF_1F_2D_P+D_PF_1F_1^*D_PP$. Subtracting these two equations we get
\begin{eqnarray*}
&& (A^*A-B^*B)P=D_P[F_1,F_2]D_P+D_P(F_1F_1^*-F_2F_2^*)D_PP
\\
&\Rightarrow& D_P(F_1^*F_1-F_2^*F_2)D_PP=D_P[F_1,F_2]D_P+D_P(F_1F_1^*-F_2F_2^*)D_PP \; [\text{ applying Lemma \ref{tetralem6}}.]
\\
&\Rightarrow& D_P([F_1,F_1^*]-[F_2,F_2^*])D_PP=0 \; [\text{ since $[F_1,F_2]=0$.}]
\\
&\Rightarrow& D_P([F_1,F_1^*]-[F_2,F_2^*])D_P=0 \; [\text{ since $RanP$ is dense in $\mathcal{H}$}.]
\\
&\Rightarrow&  [F_1,F_1^*]=[F_2,F_2^*]
\end{eqnarray*}
This completes the proof of part $(i)$ of the theorem.
\item[(ii)] From Lemma \ref{tetralem4}, we have that $PF_i=G_i^*P|_{\mathcal{D}_P}$ for $i=1$ and $2$. So we have
\begin{eqnarray*}
PF_1F_2D_P=G_1^*PF_2D_P&\Rightarrow& PF_2F_1D_P=G_1^*PF_2D_P \; [\text{ since $F_1$ and $F_2$ commute}]
\\
&\Rightarrow& G_2^*G_1^*PD_P=G_1^*G_2^*PD_P \; [\text{ applying Lemma \ref{tetralem4}}]
\\
&\Rightarrow& [G_1^*,G_2^*]D_{P^*}P=0 \Rightarrow [G_1,G_2]=0 \; [\text{ since $RanP$ is dense in $\mathcal{H}$}.]
\end{eqnarray*}
This completes the proof of part $(ii)$ of the theorem.
\item[(iii)] From Lemma \ref{tetralem2}, we have $D_PF_1=(AD_P-D_{P^*}G_2P)|_{\mathcal{D}_P}$. Which gives after multiplying $F_2D_P$ from right in both sides
\begin{eqnarray*}
&& D_PF_1F_2D_P=AD_PF_2D_P-D_{P^*}G_2PF_2D_P
\\
&\Rightarrow& D_PF_1F_2D_P = A(B-A^*P) - D_{P^*}G_2G_2^*PD_P \;[\text{ applying Lemma \ref{tetralem4}}.]
\\
&\Rightarrow& D_PF_1F_2D_P = AB-AA^*P-D_{P^*}G_2G_2^*PD_P.
\end{eqnarray*}
Similarly, multiplying $F_1D_P$ from right in both sides of $D_PF_2=(BD_{P}-D_{P^*}G_1P)|_{\mathcal{D_P}}$, we get $D_PF_2F_1D_P = BA-BB^*P-D_{P^*}G_1G_1^*PD_P$. Subtracting these two equations we get
\begin{eqnarray*}
&&D_P[F_1,F_2]D_P = D_{P^*}(G_1G_1^*-G_2G_2^*)D_{P^*}P - (AA^*-BB^*)P
\\
&\Rightarrow& D_P[F_1,F_2]D_P = D_{P^*}(G_1G_1^*-G_2G_2^*)D_{P^*}P - D_{P^*}(G_1^*G_1-G_2^*G_2)D_{P^*}P \; [\text{ applying Lemma \ref{tetralem6}}]
\\
&\Rightarrow& D_P[F_1,F_2]D_P = D_{P^*}([G_1,G_1^*]-[G_2,G_2^*])D_{P^*}P
\\
&\Rightarrow& D_{P^*}([G_1,G_1^*]-[G_2,G_2^*])D_{P^*}P=0 \; [\text{ since $[F_1,F_2]=0$.}]
\\
&\Rightarrow& [G_1,G_1^*]=[G_2,G_2^*] \; [\text{ since $RanP$ is dense in $\mathcal{H}$.}]
\end{eqnarray*}
This completes the proof of part $(iii)$ of the theorem.
\end{enumerate}
Hence the proof of the theorem.
\end{proof}
\end{theorem}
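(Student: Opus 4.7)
The plan is to exploit the four lemmas already proved in this section (Lemmas \ref{tetralem6}, \ref{tetra}, \ref{tetralem2}, \ref{tetralem4}) together with the Sz.-Nagy commutation $PD_P=D_{P^*}P$, and to reduce each assertion to an equation of the form $(\text{stuff})\cdot P = 0$ or $P \cdot (\text{stuff}) = 0$, from which the conclusion follows by the dense-range hypothesis on $P$. Crucially, I would prove the three parts in the order (i) $\to$ (ii) $\to$ (iii), because in (iii) we shall want to apply Lemma \ref{tetralem6} to the adjoint triple $(A^*,B^*,P^*)$, and that requires commutativity of $G_1$ and $G_2$, which is exactly what (ii) provides.

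For part (i), I would take the identity $D_PA = F_1D_P + F_2^*D_PP$ of Lemma \ref{tetra}, multiply it on the left by $F_2$ and then by $D_P$, and use $D_PF_2D_P = B-A^*P$ to obtain $BA - A^*AP = D_PF_2F_1D_P + D_PF_2F_2^*D_PP$. Running the symmetric manipulation on $D_PB = F_2D_P + F_1^*D_PP$ with $F_1$ gives $AB - B^*BP = D_PF_1F_2D_P + D_PF_1F_1^*D_PP$. Subtracting and using $[A,B]=0$ and $[F_1,F_2]=0$ collapses the cross-terms and, after substituting $A^*A-B^*B = D_P(F_1^*F_1-F_2^*F_2)D_P$ from Lemma \ref{tetralem6}, yields $D_P([F_1,F_1^*]-[F_2,F_2^*])D_PP = 0$. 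Dense range of $P$ kills $P$ on the right, and since $D_P$ has dense range in $\mathcal{D}_P$ while the commutators live on $\mathcal{D}_P$, we conclude $[F_1,F_1^*]=[F_2,F_2^*]$.

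Part (ii) is the quickest: starting from $F_1F_2 = F_2F_1$ on $\mathcal{D}_P$, apply $P$ on the left and use Lemma \ref{tetralem4} twice in each order to get $G_1^*G_2^*PD_P = G_2^*G_1^*PD_P$, i.e.\ $[G_1^*,G_2^*]PD_P = 0$. Replacing $PD_P$ by $D_{P^*}P$ via (\ref{Maa8}) and invoking dense range of $P$ forces $[G_1^*,G_2^*] = 0$ on $\mathcal{D}_{P^*}$, which is exactly $[G_1,G_2]=0$.

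For part (iii) I mimic the strategy of (i), but from the other side. I start from Lemma \ref{tetralem2} ($D_PF_1 = (AD_P - D_{P^*}G_2P)|_{\mathcal{D}_P}$) and multiply \emph{on the right} by $F_2D_P$; using the fundamental equation $D_PF_2D_P = B-A^*P$ and Lemma \ref{tetralem4} to rewrite $PF_2D_P = G_2^*PD_P$, this produces $D_PF_1F_2D_P = AB - AA^*P - D_{P^*}G_2G_2^*PD_P$, and symmetrically $D_PF_2F_1D_P = BA - BB^*P - D_{P^*}G_1G_1^*PD_P$. Subtracting, using $[A,B]=0$ and $[F_1,F_2]=0$, and then applying Lemma \ref{tetralem6} to the adjoint triple (legal because of part (ii)) to substitute $AA^*-BB^* = D_{P^*}(G_1^*G_1-G_2^*G_2)D_{P^*}$, all the non-commutator pieces combine (after one more use of $PD_P = D_{P^*}P$) into $D_{P^*}([G_1,G_1^*]-[G_2,G_2^*])D_{P^*}P = 0$, and dense range of $P$ closes the argument as before. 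The main obstacle is purely bookkeeping: keeping the sides and the orders straight, and remembering to invoke (ii) before (iii) so that Lemma \ref{tetralem6} is legitimately applicable to $(A^*,B^*,P^*)$.
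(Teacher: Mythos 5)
Your proposal is correct and follows essentially the same route as the paper's own proof: part (i) via left-multiplication of the Lemma \ref{tetra} identities by $F_2$ and $F_1$ plus Lemma \ref{tetralem6}, part (ii) via Lemma \ref{tetralem4} applied twice, and part (iii) via right-multiplication of the Lemma \ref{tetralem2} identities by $F_2D_P$ and $F_1D_P$ plus Lemma \ref{tetralem6} for the adjoint triple. Your explicit remark that Lemma \ref{tetralem6} applied to $(A^*,B^*,P^*)$ is only legitimate after (ii) establishes $[G_1,G_2]=0$ is a point the paper leaves implicit, and is worth keeping.
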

We would like to mention a corollary to Theorem \ref{sthm0} which gives a sufficient condition of when commutativity of the fundamental operators of $(A,B,P)$ is necessary and sufficient for the commutativity of the fundamental operators of $(A^*,B^*,P^*)$.
\begin{corollary}
Let $(A,B,P)$ be a tetrablock contraction on a Hilbert space $\mathcal{H}$ such that $P$ is invertible. Let $F_1,F_2,G_1$ and $G_2$ be as in Theorem \ref{sthm0}. Then $[F_1,F_2]=0$ if and only if $[G_1,G_2]=0$.
\end{corollary}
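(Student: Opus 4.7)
The plan is to derive this corollary as an immediate consequence of Theorem \ref{sthm0} applied symmetrically to $(A,B,P)$ and to its adjoint. The key observation is that invertibility of $P$ gives denseness of range for \emph{both} $P$ and $P^*$, which is the only hypothesis (beyond a commutativity assumption) needed to invoke part (ii) of Theorem \ref{sthm0}.

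First I would handle the forward direction: assuming $[F_1,F_2]=0$, the hypotheses of Theorem \ref{sthm0}(ii) are met verbatim (commuting fundamental operators of $(A,B,P)$ plus dense range of $P$), and its conclusion yields $[G_1,G_2]=0$.

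For the converse, the essential point is a symmetry observation. The triple $(A^*,B^*,P^*)$ is itself a tetrablock contraction whose fundamental operators are $G_1,G_2$; this is exactly the content of the defining equations (\ref{Maa13}). Correspondingly, the fundamental operators of the adjoint of $(A^*,B^*,P^*)$, namely $(A,B,P)$, are $F_1,F_2$. Thus, applying Theorem \ref{sthm0}(ii) to the tetrablock contraction $(A^*,B^*,P^*)$, with $G_1,G_2$ now playing the role of the ``$F_i$'s'' and $F_1,F_2$ playing the role of the ``$G_i$'s'', under the hypothesis $[G_1,G_2]=0$ and the fact that $RanP^*$ is dense (which follows from invertibility of $P$), we conclude $[F_1,F_2]=0$.

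There is no substantive obstacle here; the only step requiring any care is verifying that the ``swap'' just described is legitimate, which amounts to reading off (\ref{Maa12}) and (\ref{Maa13}) and noting that $(A^{**},B^{**},P^{**})=(A,B,P)$. The corollary is then a pair of invocations of Theorem \ref{sthm0}(ii), one to $(A,B,P)$ and one to its adjoint.
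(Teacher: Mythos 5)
Your proposal is correct and is essentially identical to the paper's own proof: the forward direction is a direct application of Theorem \ref{sthm0}(ii) to $(A,B,P)$, and the converse applies the same result to the adjoint triple $(A^*,B^*,P^*)$, using that invertibility of $P$ gives $P^*$ dense range. Your extra remark justifying the role-swap between the $F_i$'s and $G_i$'s is a correct (and slightly more explicit) articulation of what the paper leaves implicit.
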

\begin{proof}
Suppose that $[F_1,F_2]=0$. Since $P$ has dense range, by part (ii) of Theorem \ref{sthm0}, we get $[G_1,G_2]=0$. Conversely, let $[G_1,G_2]=0$. Since $P$ is invertible, $P^*$ has dense Range too. So applying Theorem \ref{sthm0} for the tetrablock contraction $(A^*,B^*,P^*)$, we get $[F_1,F_2]=0$.
\end{proof}
We conclude this section with another relation between the fundamental operators which will be used in the next section.
\begin{lemma}\label{scor1}
Let $F_1$ and $F_2$ be fundamental operators of a tetrablock contraction $(A,B,P)$ and $G_1$ and $G_2$ be fundamental operators of the tetrablock contraction $(A^*,B^*,P^*)$. Then
\begin{eqnarray}
&&\label{t1}(F_1^*+F_2z)\Theta_{P^*}(z)=\Theta_{P^*}(z)(G_1+G_2^*z)
\\
&&\label{t2}(F_2^*+F_1z)\Theta_{P^*}(z)=\Theta_{P^*}(z)(G_2+G_1^*z), \text{ for all }z \in \mathbb{D}.
\end{eqnarray}
\end{lemma}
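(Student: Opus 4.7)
My plan is to establish identity (\ref{t1}) by direct computation, after which identity (\ref{t2}) will follow by the same argument with the roles of $F_1,G_1$ and $F_2,G_2$ (and of $A$ and $B$) suitably interchanged. Starting from the definition $\Theta_{P^*}(z)=-P^*+zD_P(I-zP)^{-1}D_{P^*}$ and writing $\Psi(z):=(I-zP)^{-1}D_{P^*}$, I would expand the difference $(F_1^*+F_2z)\Theta_{P^*}(z)-\Theta_{P^*}(z)(G_1+G_2^*z)$ and show it vanishes on $\mathcal{D}_{P^*}$.

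Two easy reductions come first. The $z$-independent term of the expansion is $-F_1^*P^*+P^*G_1$, which vanishes on $\mathcal{D}_{P^*}$ by taking adjoints in Lemma \ref{tetralem4}. Dividing what remains by $z$ and then using the resolvent identity $\Psi(z)=D_{P^*}+zP\Psi(z)$ to split $D_PD_{P^*}$ out of each $D_P\Psi(z)$, the piece not carrying an additional factor of $z$ collapses to $F_1^*D_PD_{P^*}-D_PD_{P^*}G_1-F_2P^*+P^*G_2^*$, which is zero on $\mathcal{D}_{P^*}$ by the first identity of Lemma \ref{tetralem3}. The problem is thereby reduced to proving
\begin{equation*}
F_1^*D_PP\Psi(z)+F_2D_P\Psi(z) \;=\; D_PP\Psi(z)G_1+D_P\Psi(z)G_2^* \quad\text{on }\mathcal{D}_{P^*}.
\end{equation*}

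The second operator equation in Lemma \ref{tetra}, namely $D_PB=F_2D_P+F_1^*D_PP$, rewrites the left-hand side as $D_PB(I-zP)^{-1}D_{P^*}$. For the right-hand side, $P$ commutes with $(I-zP)^{-1}$, so after factoring one gets $D_P(I-zP)^{-1}[PD_{P^*}G_1+D_{P^*}G_2^*]$. Applying Lemma \ref{tetra} to the adjoint triple $(A^*,B^*,P^*)$ (whose fundamental operators are $G_1,G_2$) yields $D_{P^*}B^*=G_2D_{P^*}+G_1^*D_{P^*}P^*$, and taking adjoints produces $BD_{P^*}=PD_{P^*}G_1+D_{P^*}G_2^*$. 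Hence the right-hand side equals $D_P(I-zP)^{-1}BD_{P^*}$, which agrees with the left-hand side because $B$ commutes with $P$ and therefore with $(I-zP)^{-1}$. Identity (\ref{t2}) is proved identically, the relevant ingredients this time being $D_PA=F_1D_P+F_2^*D_PP$ (the first half of Lemma \ref{tetra}) and its adjoint-triple counterpart $AD_{P^*}=PD_{P^*}G_2+D_{P^*}G_1^*$, together with $[A,P]=0$.

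The main obstacle is the bookkeeping in this final step: after the two cancellations via Lemmas \ref{tetralem4} and \ref{tetralem3}, one must recognize the residual combinations $F_1^*D_PP+F_2D_P$ and $PD_{P^*}G_1+D_{P^*}G_2^*$ as $D_PB$ and $BD_{P^*}$ coming from Lemma \ref{tetra} applied to $(A,B,P)$ and to $(A^*,B^*,P^*)$ respectively; only with those identifications in hand does the commutativity of the tetrablock contraction close the argument.
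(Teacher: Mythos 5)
Your proof is correct and follows essentially the same route as the paper's: both arguments reduce the identity to the constant term (handled by Lemma \ref{tetralem4}), the linear term (handled by Lemma \ref{tetralem3}), and the higher-order part, which is closed using Lemma \ref{tetra} for $(A,B,P)$ and its adjoint-triple counterpart together with $[B,P]=0$. The only difference is presentational: you work with the resolvent $(I-zP)^{-1}$ and the identity $\Psi(z)=D_{P^*}+zP\Psi(z)$, whereas the paper expands $\Theta_{P^*}(z)$ as a power series and matches coefficients of $z^n$.
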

\begin{proof}We prove equation (\ref{t1}) only, proof of equation (\ref{t2}) is similar.
\begin{eqnarray*}
&&(F_1^*+F_2z)\Theta_{P^*}(z)
\\
&=& (F_1^*+F_2z) (-P^* + \sum_{n=0}^{\infty}z^{n+1}D_PP^nD_{P^*})
\\
&=& (-F_1^*P^*+\sum_{n=1}^{\infty}z^{n}F_1^*D_PP^{n-1}D_{P^*}) + (-zF_2P^* + \sum_{n=2}^{\infty}z^{n}F_2D_PP^{n-2}D_{P^*} )
\\
&=&
-F_1^*P^*+z(-F_2P^*+F_1^*D_PD_{P^*})+ \sum_{n=2}^{\infty} z^n(F_1^*D_PP^{n-1}D_{P^*}+F_2D_PP^{n-2}D_{P^*})
\\
&=& -F_1^*P^*+z(-F_2P^*+F_1^*D_PD_{P^*})+ \sum_{n=2}^{\infty} z^n (F_1^*D_PP+F_2D_P)P^{n-2}D_{P^*}
\\
&=&
-P^*G_1+z(D_PD_{P^*}G_1-P^*G_2^*) + \sum_{n=2}^{\infty} z^nD_PBP^{n-2}D_{P^*}. \; [\text{ using Lemma \ref{tetra}, \ref{tetralem4} and \ref{tetralem3}.}]
\end{eqnarray*}
On the other hand
\begin{eqnarray*}
&&\Theta_{P^*}(z)(G_1+G_2^*z)
\\
&=&
(-P^* + \sum_{n=0}^{\infty}z^{n+1}D_PP^nD_{P^*})(G_1+G_2^*z)
\\
&=&
(-P^*G_1 + \sum_{n=1}^{\infty}z^{n}D_PP^{n-1}D_{P^*}G_1) + (-zP^*G_2^*+\sum_{n=2}^{\infty}z^{n}D_PP^{n-2}D_{P^*}G_2^*)
\\
&=&
-P^*G_1 + z(D_PD_{P^*}G_1-P^*G_2^*) + \sum_{n=2}^{\infty}z^{n}(D_PP^{n-1}D_{P^*}G_1+D_PP^{n-2}D_{P^*}G_2^*)
\\
&=&
-P^*G_1 + z(D_PD_{P^*}G_1-P^*G_2^*) + \sum_{n=2}^{\infty}z^{n}D_PP^{n-2}(PD_{P^*}G_1+D_PG_2^*)
\\
&=&
-P^*G_1 + z(D_PD_{P^*}G_1-P^*G_2^*) + \sum_{n=2}^{\infty}z^{n}D_PP^{n-2}BD_{P^*}
\\
&=& -P^*G_1+z(D_PD_{P^*}G_1-P^*G_2^*) + \sum_{n=2}^{\infty} z^nD_PBP^{n-2}D_{P^*}.
\end{eqnarray*}
Hence $(F_1^*+F_2z)\Theta_{P^*}(z)=\Theta_{P^*}(z)(G_1+G_2^*z)$ for all $z \in \mathbb{D}$.
Hence the proof.
\end{proof}

\section{beurling-lax-halmos representation for a triple of operators}
In this section, we prove a Beurling-Lax-Halmos type theorem for the triple of operators $(M_{F_1^*+F_2z},M_{F_2^*+F_1z},M_z)$ on $H^2_{\mathcal{E}}(\mathbb{D})$, where $\mathcal{E}$ is a Hilbert space and $F_1,F_2 \in \mathcal{B}(\mathcal{E})$. The triple $(M_{F_1^*+F_2z},M_{F_2^*+F_1z},M_z)$ is not commuting triple in general, but  we have showed that when they commute an interesting thing happens.
\begin{theorem}\label{thm1}\label{sthm1}
Let $F_1,F_2 \in \mathcal{B}(\mathcal{E})$ be two operators. Then $\{0\} \neq \mathcal{M} \subseteq H^2_{\mathcal{E}}(\mathbb{D})$ is $(M_{F_1^*+F_2z},M_{F_2^*+F_1z},M_z)$ invariant if and only if
\begin{eqnarray*}
&&(F_1^*+F_2z)\Theta(z)=\Theta(z)(G_1+G_2^*z) \text{ and}
\\
&&(F_2^*+F_1z)\Theta(z)=\Theta(z)(G_2+G_1^*z) \text{ for all }z \in \mathbb{D},
\end{eqnarray*}
for some unique $G_1,G_2 \in \mathcal{B}(\mathcal{E}_{*})$, where $(\mathcal{E}_{*},\mathcal{E},\Theta)$ is the Beurling-Lax-Halmos representation of $\mathcal{M}$.
\\
Moreover, if $(M_{F_1^*+F_2z},M_{F_2^*+F_1z},M_z)$ on $H^2_{\mathcal{E}}(\mathbb{D})$ is a tetrablock isometry then $(M_{G_1+G_2^*z},M_{G_2+G_1^*z},M_z)$ is also a tetrablock isometry on $H^2(\mathcal{E_{*}})$ .
\end{theorem}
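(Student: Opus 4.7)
The reverse implication is routine: given the two intertwinings and any $G_1, G_2$, every element $\Theta(z)f(z) \in \mathcal{M} = \Theta H^2_{\mathcal{E}_*}(\mathbb{D})$ is sent by $M_{F_1^*+F_2z}$ to $\Theta(z)(G_1+G_2^*z)f(z) \in \mathcal{M}$, and similarly by $M_{F_2^*+F_1z}$, while $M_z$-invariance is built into the Beurling-Lax-Halmos form. For the forward direction, I would first invoke Beurling-Lax-Halmos on the nonzero $M_z$-invariant $\mathcal{M}$ to produce $(\mathcal{E}_*, \mathcal{E}, \Theta)$ with $\mathcal{M} = \Theta H^2_{\mathcal{E}_*}(\mathbb{D})$ and $M_\Theta$ an isometry.

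I then form the compressions
\[
\Phi := M_\Theta^* M_{F_1^*+F_2z} M_\Theta, \qquad \tilde\Phi := M_\Theta^* M_{F_2^*+F_1z} M_\Theta
\]
as operators on $H^2_{\mathcal{E}_*}(\mathbb{D})$. The $M_{F_1^*+F_2z}$-invariance of $\mathcal{M}$ together with $M_\Theta M_\Theta^* = P_\mathcal{M}$ gives $M_\Theta \Phi = M_{F_1^*+F_2z} M_\Theta$, and similarly $M_\Theta \tilde\Phi = M_{F_2^*+F_1z} M_\Theta$. Since $F_1^*+F_2z$ and $F_2^*+F_1z$ commute with $z$ as scalar expressions, the corresponding multipliers commute with $M_z$; combined with $M_\Theta M_z = M_z M_\Theta$ and the injectivity of $M_\Theta$, this forces $\Phi M_z = M_z \Phi$ and $\tilde\Phi M_z = M_z \tilde\Phi$. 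Hence $\Phi$ and $\tilde\Phi$ are multiplications by $H^\infty_{\mathcal{B}(\mathcal{E}_*)}$-valued functions, still denoted $\Phi(z)$ and $\tilde\Phi(z)$, satisfying
\[
(F_1^*+F_2z)\Theta(z) = \Theta(z)\Phi(z), \qquad (F_2^*+F_1z)\Theta(z) = \Theta(z)\tilde\Phi(z) \quad \text{on } \mathbb{D}.
\]

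The main obstacle is establishing that $\Phi(z)$ and $\tilde\Phi(z)$ are polynomials of degree at most one of the claimed form. I would pass to the boundary and use the inner property $\Theta(z)^*\Theta(z) = I_{\mathcal{E}_*}$ a.e.\ on $\mathbb{T}$. Taking adjoints of the first intertwining on $\mathbb{T}$ (where $\bar z = z^{-1}$) yields $\Theta(z)^*(F_1 + F_2^* z^{-1}) = \Phi(z)^*\Theta(z)^*$; multiplying by $z$ on the left and by $\Theta(z)$ on the right then gives $\Theta(z)^*(F_2^*+F_1z)\Theta(z) = z\Phi(z)^*$, whose left-hand side equals $\tilde\Phi(z)$ by the second intertwining together with the inner property. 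This produces the decisive identity
\[
\tilde\Phi(z) = z\,\Phi(z)^* \qquad \text{a.e.\ on } \mathbb{T}.
\]
Expanding $\Phi(z) = \sum_{n \geq 0} \Phi_n z^n$, one computes $z\Phi(z)^* = \Phi_1^* + \Phi_0^* z + \sum_{n \geq 2} \Phi_n^* z^{1-n}$ on $\mathbb{T}$; since $\tilde\Phi \in H^\infty$ has no negative Fourier powers, $\Phi_n = 0$ for all $n \geq 2$, so $\Phi(z) = \Phi_0 + \Phi_1 z$ and $\tilde\Phi(z) = \Phi_1^* + \Phi_0^* z$. Setting $G_1 := \Phi_0$ and $G_2 := \Phi_1^*$ produces the required form, with uniqueness immediate from $G_1 = \Phi(0)$ and $G_2^* = \Phi'(0)$.

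For the final assertion, $M_\Theta\colon H^2_{\mathcal{E}_*}(\mathbb{D}) \to \mathcal{M}$ is a unitary, and the intertwinings rewrite as $M_\Theta M_{G_1+G_2^*z} = M_{F_1^*+F_2z}|_\mathcal{M}\, M_\Theta$ and $M_\Theta M_{G_2+G_1^*z} = M_{F_2^*+F_1z}|_\mathcal{M}\, M_\Theta$. Therefore $(M_{G_1+G_2^*z}, M_{G_2+G_1^*z}, M_z)$ on $H^2_{\mathcal{E}_*}(\mathbb{D})$ is unitarily equivalent to the restriction of $(M_{F_1^*+F_2z}, M_{F_2^*+F_1z}, M_z)$ to the joint invariant subspace $\mathcal{M}$, and the restriction of a tetrablock isometry to a joint invariant subspace is again a tetrablock isometry (being a further restriction of the ambient tetrablock unitary), which finishes the ``moreover'' part.
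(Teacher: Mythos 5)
Your proof of the main equivalence is essentially the paper's argument: both invoke Beurling--Lax--Halmos, use joint invariance and the isometry $M_\Theta$ to define the compressions, show they commute with $M_z$ and hence are multipliers $M_\Phi$, $M_{\tilde\Phi}$, and then extract the degree-one form from a reciprocal relation between the two symbols. The paper derives this relation as the operator identity $M_{\Psi}=M_{\Phi}^*M_z$ (using $M_z^*M_{F_1^*+F_2z}=M_{F_2^*+F_1z}^*$), whereas you obtain the equivalent boundary identity $\tilde\Phi(z)=z\,\Phi(z)^*$ a.e.\ on $\mathbb{T}$ from the inner property $\Theta^*\Theta=I$; the Fourier-coefficient conclusion is then the same. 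Where you genuinely diverge is the ``moreover'' part: the paper checks commutativity of the $G$-triple by hand, establishes $\lVert M_{G_2+G_1^*z}\rVert\le 1$ from the compression formula, and invokes the characterization of tetrablock isometries in part (3) of Theorem 5.7 of the Bhattacharyya reference, while you observe that $M_\Theta$ is a unitary onto $\mathcal{M}$ intertwining the $G$-triple with the restriction of the $F$-triple to the joint invariant subspace $\mathcal{M}$, and that such a restriction is again a tetrablock isometry directly from the definition. Your route is shorter and avoids the external characterization theorem; the paper's route has the mild advantage of exhibiting the algebraic relations ($\lVert\cdot\rVert\le1$ and $M_\Phi=M_\Psi^*M_z$) that identify the triple as a tetrablock isometry intrinsically on $H^2_{\mathcal{E}_*}(\mathbb{D})$. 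Both are correct.
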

\begin{proof}
So let us assume that $\{0\} \neq \mathcal{M} \subseteq H^2_{\mathcal{E}}(\mathbb{D})$ is a $(M_{F_1^*+F_2z},M_{F_2^*+F_1z},M_z)$ invariant subspace. Let $\mathcal{M}=M_\Theta H^2_{\mathcal{E_*}}(\mathbb{D})$ be the Beurling-Lax-Halmos representation of $\mathcal{M}$, where $(\mathcal{E}_{*},\mathcal{E},\Theta)$ is an inner analytic function and $\mathcal{E}_{*}$ is an auxiliary Hilbert space.
Since $\mathcal{M}$ is $M_{F_1^*+F_2z}$ and $M_{F_2^*+F_1z}$ invariant also, we have
\begin{eqnarray*}
&&M_{F_1^*+F_2z}M_\Theta H^2_{\mathcal{E_*}}(\mathbb{D}) \subseteq M_\Theta H^2_{\mathcal{E_*}}(\mathbb{D}) \text{ and }
\\
&&M_{F_2^*+F_1z}M_\Theta H^2_{\mathcal{E_*}}(\mathbb{D}) \subseteq M_\Theta H^2_{\mathcal{E_*}}(\mathbb{D}).
\end{eqnarray*}
Now let us define two operators $X$ and $Y$ on $H^2(\mathcal{E}_{*})$ by the following way:
\begin{eqnarray*}
&&M_{F_1^*+F_2z}M_\Theta= M_\Theta X \text{ and }
\\
&&M_{F_2^*+F_1z}M_\Theta= M_\Theta Y.
\end{eqnarray*}
That $X$ and $Y$ are well defined and unique, follows from the fact that $\Theta$ is an inner analytic function, hence $M_{\Theta}$ is an isometry, (see \cite{Nagy-Foias}, Proposition 2.2, chapter V).
 \begin{eqnarray*}
 M_{F_1^*+F_2z}M_\Theta= M_\Theta X &\Rightarrow& M_\Theta^*M_{F_1^*+F_2z}^*M_\Theta=X^* \; [\text{ since $M_{\Theta}$ is an isometry}]
 \\
 &\Rightarrow& M_z^*M_\Theta^*M_{F_1^*+F_2z}^*M_\Theta=M_z^*X^*
 \\
 &\Rightarrow& M_\Theta^*M_{F_1^*+F_2z}^*M_\Theta M_z^* = M_z^*X^* \; [\text{ since $M_z$ commutes with $M_{\Theta}$ and $M_{F_1^*+F_2z}$}]
 \\
 &\Rightarrow& X^* M_z^*=M_z^*X^*.
 \end{eqnarray*}
 Hence $X$ commutes with $M_{z}$. Similarly one can prove that $Y$ commutes with $M_z$. So $X=M_{\Phi}$ and $Y=M_{\Psi}$, for some $\Phi, \Psi \in H^{\infty}(\mathcal{B}(\mathcal{E}_{*}))$.
Therefore we have
\begin{eqnarray}\label{s1}
&&M_{F_1^*+F_2z}M_\Theta= M_\Theta M_{\Phi} \text{ and }
\\
\label{s2}
&&M_{F_2^*+F_1z}M_\Theta= M_\Theta M_{\Psi}.
\end{eqnarray}
Multiplying $M_\Theta^*$ from left of (\ref{s1}) and (\ref{s2}) and using the fact that $M_\Theta$ is an isometry, we get
\begin{eqnarray}\label{s1'}
&& M_\Theta^*M_{F_1^*+F_2z}M_\Theta=  M_{\Phi} \text{ and }
\\
\label{s2'}
&& M_\Theta^*M_{F_2^*+F_1z}M_\Theta=  M_{\Psi}.
\end{eqnarray}
Multiplying $M_z^*$ from left of (\ref{s1'}) we get, $M_{\Theta}^*M_{F_2^*+F_1z}^*M_\Theta=M_z^*M_{\Phi}$, here we have used the fact that $M_{\Theta}$ and $M_z$ commute. Hence $M_{\Psi} = M_\Theta^*M_\Theta M_{\Psi} = M_\Theta^* M_{F_2^*+F_1z}M_\Theta= M_{\Phi}^*M_z$. Similarly dealing with equation (\ref{s2'}), we get $M_{\Phi} = M_{\Psi}^*M_z$.
Considering the power series expression of $\Phi$ and $\Psi$ and using that $M_{\Phi} = M_{\Psi}^*M_z$ and $M_{\Psi} = M_{\Phi}^*M_z$, we get $\Phi$ and $\Psi$ to be of the form $\Phi(z)=G_1+G_2^*z$ and $\Psi(z)= G_2 + G_1^*z$ for some $G_1,G_2 \in \mathcal{B}(\mathcal{E}_{*})$.
Uniqueness of $G_1$ and $G_2$ follows from the fact that $X$ and $Y$ are unique. The converse part is trivial. Hence the proof of the first part of the theorem.
\\
Moreover, suppose that $(M_{F_1^*+F_2z},M_{F_2^*+F_1z},M_z)$ is a tetrablock isometry.
To show that $(M_{G_1+G_2^*z},M_{G_2+G_1^*z},M_z)$ is also a tetrablock isometry we first show that they commute with each other. Commutativity of $M_{G_1+G_2^*z}$ and
$M_{G_2+G_1^*z}$ with $M_z$ is clear. Now
\begin{eqnarray*}
&&M_{G_1+G_2^*z}M_{G_2+G_1^*z}
\\
&=& M_{\Theta}^*M_{F_1^*+F_2z}M_\Theta M_{\Theta}^*M_{F_2^*+F_1z}M_\Theta \; [\text{ using equation (\ref{s1'}) and (\ref{s2'})}]
\\
&=& M_{\Theta}^*M_{F_1^*+F_2z}M_{F_2^*+F_1z}M_\Theta \; [\text{by equation (\ref{s2}) and that $M_\Theta$ is an isometry.}]
\\
&=&
M_{\Theta}^*M_{F_2^*+F_1z}M_{F_1^*+F_2z}M_\Theta \;  [\text{ since $M_{F_1^*+F_2z}$ and $M_{F_2^*+F_1z}$ commute.}]
\\
&=&
M_{\Theta}^*M_{F_2^*+F_1z}M_\Theta M_\Theta^*M_{F_1^*+F_2z}M_\Theta \; [\text {by equation (\ref{s1}) and that $M_\Theta$ is an isometry,}]
\\
&=&
M_{G_2+G_1^*z}M_{G_1+G_2^*z}.
\end{eqnarray*}
Since $(M_{F_1^*+F_2z},M_{F_2^*+F_1z},M_z)$ is a tetrablock isometry, we have $||M_{F_2^*+F_1z}|| \leq 1$, by part(3) of Theorem 5.7 in \cite{sir's tetrablock paper}. From the operator equation
$M_{G_2+G_1^*z}=M_{\Theta}^*M_{F_2^*+F_1z}M_\Theta$ we get that $||M_{G_2+G_1^*z}|| \leq 1$. From the proof of the first part, we have that $M_{\Phi} = M_{\Psi}^*M_z $
Hence $(M_{G_1+G_2^*z},M_{G_2+G_1^*z},M_z)$ is a tetrablock isometry invoking part(3) of Theorem 5.7 in \cite{sir's tetrablock paper}. This completes the proof of the theorem.
\end{proof}
Now we use Lemma \ref{scor1} to prove the following result which is a consequence of Theorem \ref{sthm1}.
\begin{corollary}
Let $F_1,F_2$ and $G_1,G_2$ be fundamental operators of $(A,B,P)$ and $(A^*,B^*,P^*)$ respectively. Then the triple $(M_{G_1+G_2^*z},M_{G_2+G_1^*z},M_z)$ is a tetrablock isometry whenever $(M_{F_1^*+F_2z},M_{F_2^*+F_1z},M_z)$ is a tetrablock isometry, provided $P^*$ is pure, i.e., $P^n \to 0$ strongly as $n \to \infty$.
\end{corollary}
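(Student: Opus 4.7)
The plan is to invoke Theorem \ref{sthm1} with the characteristic function $\Theta_{P^*}$ playing the role of the inner function in the Beurling-Lax-Halmos representation; the intertwining identities needed to verify the hypotheses of Theorem \ref{sthm1} are exactly those supplied by Lemma \ref{scor1}.

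First, since $P^*$ is pure (i.e. $P^n\to 0$ strongly), the Sz.-Nagy--Foias theory (see \cite{Nagy-Foias}) guarantees that the characteristic function $\Theta_{P^*}:\mathcal{D}_{P^*}\to\mathcal{D}_P$ is an inner analytic function. Consequently
$$
\mathcal{M}:=M_{\Theta_{P^*}}H^2_{\mathcal{D}_{P^*}}(\mathbb{D})
$$
is a closed $M_z$-invariant subspace of $H^2_{\mathcal{D}_P}(\mathbb{D})$, and $(\mathcal{D}_{P^*},\mathcal{D}_P,\Theta_{P^*})$ is its Beurling-Lax-Halmos representation. (The degenerate case $\mathcal{D}_{P^*}=\{0\}$ forces $P$ to be a coisometry and the claim becomes vacuous.)

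Second, I would use Lemma \ref{scor1} to check that $\mathcal{M}$ is invariant under $M_{F_1^*+F_2z}$ and $M_{F_2^*+F_1z}$ as well. Translating the two identities of Lemma \ref{scor1} into multiplier language gives
$$
M_{F_1^*+F_2z}M_{\Theta_{P^*}}=M_{\Theta_{P^*}}M_{G_1+G_2^*z},\qquad M_{F_2^*+F_1z}M_{\Theta_{P^*}}=M_{\Theta_{P^*}}M_{G_2+G_1^*z},
$$
so both $M_{F_1^*+F_2z}$ and $M_{F_2^*+F_1z}$ carry $\mathcal{M}$ into itself. Thus $\mathcal{M}$ is $(M_{F_1^*+F_2z},M_{F_2^*+F_1z},M_z)$-invariant.

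Third, I would apply Theorem \ref{sthm1} to this invariant subspace. That theorem produces unique operators $G_1',G_2'\in\mathcal{B}(\mathcal{D}_{P^*})$ satisfying the same intertwining identities as above; the uniqueness part of Theorem \ref{sthm1} combined with the identities of Lemma \ref{scor1} forces $G_1'=G_1$ and $G_2'=G_2$. The ``moreover'' conclusion of Theorem \ref{sthm1}, applied under the standing hypothesis that $(M_{F_1^*+F_2z},M_{F_2^*+F_1z},M_z)$ is a tetrablock isometry on $H^2_{\mathcal{D}_P}(\mathbb{D})$, then yields that $(M_{G_1+G_2^*z},M_{G_2+G_1^*z},M_z)$ is a tetrablock isometry on $H^2_{\mathcal{D}_{P^*}}(\mathbb{D})$, as required.

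The only non-routine point is the appeal to innerness of $\Theta_{P^*}$ under the purity hypothesis on $P^*$; this is the classical criterion that $\Theta_T$ is inner precisely when $T$ is of class $C_{\cdot 0}$, so it is cited from \cite{Nagy-Foias} rather than reproved. Everything else is a direct plug-in into the two results already established in the paper.
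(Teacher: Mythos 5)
Your proposal is correct and takes essentially the same route as the paper: purity of $P^*$ makes $\Theta_{P^*}$ inner (so $M_{\Theta_{P^*}}$ is an isometry), Lemma \ref{scor1} supplies the required intertwining identities, and the last part of Theorem \ref{sthm1} then delivers the conclusion. The additional care you take --- checking invariance of $\mathcal{M}$, matching the operators produced by Theorem \ref{sthm1} with the fundamental operators via uniqueness, and noting the degenerate case $\mathcal{D}_{P^*}=\{0\}$ --- merely makes explicit what the paper's terser proof leaves implicit.
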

\begin{proof}
Note that while proving the last part of Theorem \ref{sthm1}, we used the fact that the multiplier $M_{\Theta}$ is an isometry. Since $P^*$ is pure, by virtu of Proposition 3.5 of chapter VI in \cite{Nagy-Foias}, we note that the multiplier $M_{\Theta_{P^*}}$ is an isometry. From Lemma \ref{scor1}, we have
\begin{eqnarray*}
&&(F_1^*+F_2z)\Theta_{P^*}(z)=\Theta_{P^*}(z)(G_1+G_2^*z)
\\
&&(F_2^*+F_1z)\Theta_{P^*}(z)=\Theta_{P^*}(z)(G_2+G_1^*z), \text{ for all }z \in \mathbb{D}.
\end{eqnarray*}
Invoking the last part of Theorem \ref{sthm1}, we get the result as stated.
\end{proof}

\section{functional model}
In this section we find a functional model of pure tetrablock contractions. We first need to recall the functional model of pure contractions from \cite{Nagy-Foias}.
\\
Recall from the introduction that the characteristic function $\Theta_P$ of a contraction $P$ on a Hilbert space $\mathcal{H}$ is defined by
$$
\Theta_P(z)=[-P+zD_{P^*}(I_\mathcal{H}-zP^*)^{-1}D_P]|_{\mathcal{D}_P}, \text{ for all $z \in \mathbb{D}$}.
$$
By virtue of the relation (\ref{Maa8}), we get that each $\Theta_P(z)$ is an operator from $\mathcal{D}_P$ into $\mathcal{D}_{P^*}$.
The characteristic function induces a multiplication operator $M_{\Theta_P}$ from $H^2(\mathbb{D})\otimes \mathcal{D}_{P}$ into $H^2(\mathbb{D})\otimes \mathcal{D}_{P^*}$, defined by
$$
M_{\Theta_P}f(z)=\Theta_P(z)f(z), \text{ for all $f \in H^2(\mathbb{D})\otimes \mathcal{D}_{P}$ and $z \in \mathbb{D}$}.
$$
Note that $M_{\Theta_P}(M_z \otimes I_{\mathcal{D}_P})=(M_z \otimes I_{\mathcal{D}_{P^*}})M_{\Theta_P}$. Let us define
$$
\mathcal{H}_P=(H^2(\mathbb{D})\otimes \mathcal{D}_{P^*}) \ominus M_{\Theta_P}(H^2(\mathbb{D}) \otimes \mathcal{D}_P).
$$
In \cite{Nagy-Foias}, Sz.-Nagy and Foias showed that every pure contraction $P$ defined on an abstract Hilbert space $\mathcal{H}$ is unitarily equivalent to the operator $P_{\mathcal{H}_P}(M_z \otimes I_{\mathcal{D}_{P^*}})_{\mathcal{H}_P}$ on the Hilbert space $\mathcal{H}_P$ defined above, where $P_{\mathcal{H}_P}$ is the projection of $H^2(\mathbb{D})\otimes \mathcal{D}_{P^*}$ onto $\mathcal{H}_P$.
\\
Before going to state and proof the main result of this section, let us mention an interesting and well-known result, a proof of which  can be found in \cite{ch. function}. There it is proved for a commuting contractive d-tuple, for d $\geq 1$(Lemma 3.6). We shall write the proof here for the sake of completeness.
\\
Define $W: \mathcal{H} \to H^2(\mathbb{D})\otimes \mathcal{D}_{P^*} $ by
$$
W(h)=\sum_{n=0}^{\infty} z^n \otimes D_{P^*}{P^*}^nh, \text{ for all $h \in \mathcal{H}$}.
$$
It is easy to check that $W$ is an isometry for pure contractions $P$ and its adjoint is given by
$$
W^*(z^n \otimes \xi) = P^nD_{P^*} \xi, \text{ for all $\xi \in \mathcal{D}_{P^*}$ and $n \geq 0$.}
$$
\begin{lemma}\label{L0}
For every contraction $P$, the identity
\begin{eqnarray}\label{L1}
WW^*+M_{\Theta_P}M_{\Theta_P}^*=I_{H^2(\mathbb{D})\otimes \mathcal{D}_{P^*}}
\end{eqnarray}
holds.
\end{lemma}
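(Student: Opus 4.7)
The plan is to verify the identity by testing both sides against the vector-valued reproducing kernels $k_w \otimes \xi$, where $k_w(z) = (1-\bar{w}z)^{-1}$, $w \in \mathbb{D}$, and $\xi \in \mathcal{D}_{P^*}$; these elements span a dense subspace of $H^2(\mathbb{D}) \otimes \mathcal{D}_{P^*}$. From $W^*(z^n \otimes \xi) = P^n D_{P^*}\xi$, summing a geometric series produces $W^*(k_w \otimes \xi) = (I - \bar{w}P)^{-1}D_{P^*}\xi$, and applying $W$ to this vector gives the value at $z$ of $WW^*(k_w \otimes \xi)$ as $D_{P^*}(I - zP^*)^{-1}(I - \bar{w}P)^{-1}D_{P^*}\xi$. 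On the multiplier side, the standard reproducing kernel identity yields $M_{\Theta_P}^*(k_w \otimes \xi) = k_w \otimes \Theta_P(w)^*\xi$, and hence $M_{\Theta_P}M_{\Theta_P}^*(k_w \otimes \xi)$ is pointwise equal to $(1 - \bar{w}z)^{-1}\Theta_P(z)\Theta_P(w)^*\xi$. After multiplying through by $(1-\bar{w}z)$, identity (\ref{L1}) collapses to the operator identity
\[
I_{\mathcal{D}_{P^*}} - \Theta_P(z)\Theta_P(w)^* = (1 - z\bar{w})\,D_{P^*}(I-zP^*)^{-1}(I-\bar{w}P)^{-1}D_{P^*}
\]
on $\mathcal{D}_{P^*}$, to be verified for all $z, w \in \mathbb{D}$.

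Next I would prove this operator identity by direct expansion. Setting $L = (I-zP^*)^{-1}$ and $K = (I-\bar{w}P)^{-1}$, multiply out $\Theta_P(z)\Theta_P(w)^* = (-P + zD_{P^*}LD_P)(-P^* + \bar{w}D_PKD_{P^*})$ and use the Sz.-Nagy--Foias commutation rules (\ref{Maa8}) and (\ref{Maa10}) together with $D_P^2 = I - P^*P$ to pull every factor of $D_{P^*}$ to the outside. The four terms become $PP^*$ plus $D_{P^*}[\,\cdot\,]D_{P^*}$, where the bracket equals $-\bar{w}PK - zLP^* + z\bar{w}LK - z\bar{w}LP^*PK$. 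The trivial relations $\bar{w}PK = K - I$ and $zLP^* = L - I$ (immediate from the definitions, since $P^*$ commutes with $L$) collapse the bracket to $I - (1 - z\bar{w})LK$. Since $PP^* + D_{P^*}^2 = I$, this yields
\[
\Theta_P(z)\Theta_P(w)^* = I - (1-z\bar{w})\,D_{P^*}LKD_{P^*},
\]
which is the required identity.

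Finally, since both $WW^*$ and $M_{\Theta_P}M_{\Theta_P}^*$ are bounded and the kernels $\{k_w \otimes \xi\}$ are total in $H^2(\mathbb{D}) \otimes \mathcal{D}_{P^*}$, the pointwise verification on kernels gives (\ref{L1}). The only real obstacle is the bookkeeping in the expansion of $\Theta_P(z)\Theta_P(w)^*$: the intertwinings must be applied consistently to sandwich every middle factor between two copies of $D_{P^*}$, after which everything collapses algebraically. Notably the argument uses no purity of $P$, so the identity holds for an arbitrary contraction, as stated in the lemma.
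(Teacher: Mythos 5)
Your proof is correct and follows essentially the same route as the paper's: both evaluate $WW^*$ and $M_{\Theta_P}M_{\Theta_P}^*$ on the reproducing kernels $k_w\otimes\xi$ via the formula $W^*(k_w\otimes\xi)=(I-\bar{w}P)^{-1}D_{P^*}\xi$, reduce the claim to the identity $I-\Theta_P(z)\Theta_P(w)^*=(1-z\bar{w})D_{P^*}(I-zP^*)^{-1}(I-\bar{w}P)^{-1}D_{P^*}$, and conclude by totality of the kernels. The only difference is that the paper cites that identity as well known, whereas you verify it by direct expansion (your bookkeeping with the intertwinings (\ref{Maa8}) and (\ref{Maa10}) checks out), which makes your argument slightly more self-contained.
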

\begin{proof}
As observed by Arveson in the proof of Theorem 1.2 in \cite{Arveson}, the operator $W^*$ satisfies the identity
$$
W^*(k_z \otimes \xi)= (I - \bar{z}P)^{-1}D_{P^*}\xi \text{ for $z \in \mathbb{D}$ and $\xi \in \mathcal{D}_{P^*}$},
$$
where $k_z(w):=(1-\langle w,z\rangle)^{-1}$ for all $w \in \mathbb{D}$. Therefore we have
\begin{eqnarray*}
&&\langle (WW^*+M_{\Theta_P}M_{\Theta_P}^*)(k_z \otimes \xi), (k_w \otimes \eta) \rangle
\\
&=& \langle W^*(k_z \otimes \xi),W^*(k_w \otimes \eta) \rangle + \langle M_{\Theta_P}^*(k_z \otimes \xi), M_{\Theta_P}^*(k_w \otimes \eta)  \rangle
\\
&=& \langle (I - \bar{z}P)^{-1}D_{P^*}\xi, (I - \bar{w}P)^{-1}D_{P^*}\eta  \rangle + \langle k_z \otimes \Theta_P(z)^*\xi, k_w \otimes \Theta_P(w)^*\eta  \rangle
\\
&=& \langle D_{P^*}(I - wP^*)^{-1}(I - \bar{z}P)^{-1}D_{P^*}\xi, \eta \rangle + \langle k_z, k_w \rangle \langle \Theta_P(w)\Theta_P(z)^*\xi, \eta  \rangle
\\
&=& \langle k_z \otimes \xi, k_w \otimes \eta \rangle \text{ for all $z ,w \in \mathbb{D}$ and $\xi, \eta \in \mathcal{D}_{P^*}$}.
\end{eqnarray*}
Where the last equality follows from the following well-known identity
$$
I - \Theta_P(w)\Theta_P(z)^* = (1 - w\bar{z})D_{P^*}(I - wP^*)^{-1}(I - \bar{z}P)^{-1}D_{P^*}.
$$
Now using the fact that $\{k_z: z \in \mathbb{D}\}$ forms a total set of $H^2(\mathbb{D})$, the assertion follows.
\end{proof}
The following theorem is the main result of this section.
\begin{theorem}\label{fm}
Let $(A,B,P)$ be a pure tetrablock contraction on a Hilbert space $\mathcal{H}$. Then the operators $A,B$ and $P$ are unitarily equivalent to $P_{\mathcal{H}_P}(I \otimes G_1^* + M_z \otimes G_2)|_{\mathcal{H}_P}, P_{\mathcal{H}_P}(I \otimes G_2^* + M_z \otimes G_1)|_{\mathcal{H}_P}$ and $P_{\mathcal{H}_P}(M_z \otimes I_{\mathcal{D}_{P^*}})|_{\mathcal{H}_P}$ respectively, where $G_1,G_2$ are the fundamental operators of $(A^*,B^*,P^*)$.
\end{theorem}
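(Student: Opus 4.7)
The plan is to exploit the isometry $W\colon \mathcal{H} \to H^2(\mathbb{D})\otimes\mathcal{D}_{P^*}$ defined by $W(h)=\sum_{n\geq 0} z^n\otimes D_{P^*}{P^*}^n h$. Since $P$ is pure, $W$ is isometric with range equal to $\mathcal{H}_P$ (so $WW^*=P_{\mathcal{H}_P}$ and $W^*W=I_{\mathcal{H}}$), and the Sz.-Nagy--Foias model already identifies $W$ as the unitary that realizes $P\simeq P_{\mathcal{H}_P}(M_z\otimes I_{\mathcal{D}_{P^*}})|_{\mathcal{H}_P}$; indeed one checks directly that $WP^* = (M_z^*\otimes I)W$. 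I only have to show that the same $W$ simultaneously intertwines $A$ and $B$ with the prescribed compressions.

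The reduction is to establish the two operator identities on $\mathcal{H}$:
\begin{align*}
W A^* &= (I\otimes G_1 + M_z^*\otimes G_2^*)\,W,\\
W B^* &= (I\otimes G_2 + M_z^*\otimes G_1^*)\,W.
\end{align*}
Once these hold, passing to adjoints gives $A W^* = W^*(I\otimes G_1^* + M_z\otimes G_2)$, and left multiplication by $W$ together with $WW^* = P_{\mathcal{H}_P}$ yields
\[
WAW^* = P_{\mathcal{H}_P}(I\otimes G_1^* + M_z\otimes G_2),
\]
whose restriction to $\mathcal{H}_P$ is exactly the model operator for $A$. The argument for $B$ is identical.

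To verify the first intertwining I compute the coefficient of $z^n$ on each side when applied to $h\in\mathcal{H}$. On the left the coefficient is $D_{P^*}{P^*}^n A^* h$, which since $A$ commutes with $P$ equals $D_{P^*} A^* {P^*}^n h$. On the right, $(I\otimes G_1)Wh$ contributes $G_1 D_{P^*}{P^*}^n h$ in slot $n$, while $(M_z^*\otimes G_2^*)Wh$ contributes $G_2^* D_{P^*}{P^*}^{n+1} h$. Matching coefficients, the entire intertwining collapses to the single operator identity
\[
D_{P^*} A^* = G_1 D_{P^*} + G_2^* D_{P^*} P^*,
\]
which is precisely Lemma \ref{tetra} applied to the adjoint tetrablock contraction $(A^*,B^*,P^*)$ whose fundamental operators are $G_1,G_2$. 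The companion computation for $WB^*$ collapses in the same way to $D_{P^*} B^* = G_2 D_{P^*} + G_1^* D_{P^*} P^*$, the second half of that same application of Lemma \ref{tetra}.

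The only real subtlety I foresee is conceptual rather than computational: recognizing that the candidate operator pencils $I\otimes G_1^* + M_z\otimes G_2$ and $I\otimes G_2^* + M_z\otimes G_1$ appearing in the statement are forced by the fundamental-operator equations of the adjoint triple, and that Lemma \ref{tetra} supplies exactly these equations when applied to $(A^*,B^*,P^*)$. After that, the proof is routine term-by-term matching of power series in the Sz.-Nagy--Foias model map $W$, followed by the standard passage from intertwining to compression-form unitary equivalence via $WW^*=P_{\mathcal{H}_P}$.
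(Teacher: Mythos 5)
Your proposal is correct and follows essentially the same route as the paper: both use the pure-isometry $W(h)=\sum_n z^n\otimes D_{P^*}{P^*}^nh$ with $\operatorname{Ran}W=\mathcal{H}_P$, and both reduce the intertwining to the identities $D_{P^*}A^*=G_1D_{P^*}+G_2^*D_{P^*}P^*$ and $D_{P^*}B^*=G_2D_{P^*}+G_1^*D_{P^*}P^*$ obtained from Lemma \ref{tetra} applied to $(A^*,B^*,P^*)$. The only cosmetic difference is that you verify $WA^*=(I\otimes G_1+M_z^*\otimes G_2^*)W$ and take adjoints, whereas the paper computes $W^*(I\otimes G_1^*+M_z\otimes G_2)=AW^*$ directly on the vectors $z^n\otimes\xi$.
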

\begin{proof}
Since $W$ is an isometry, $WW^*$ is the projection onto $RanW$ and since $P$ is pure, $M_{\Theta_P}$ is also an isometry. So by Lemma \ref{L0}, we have $W(\mathcal{H}_P)=(H^2(\mathbb{D})\otimes \mathcal{D}_{P^*}) \ominus M_{\Theta_P}(H^2(\mathbb{D}) \otimes \mathcal{D}_P)$.
\begin{eqnarray*}
&&W^*(I \otimes G_1^* + M_z \otimes G_2)(z^n \otimes \xi)
\\
&=&
W^*(z^n \otimes G_1^* \xi) + W^*(z^{n+1} \otimes G_2 \xi)
\\
&=&
P^nD_{P^*}G_1^* \xi + P^{n+1}D_{P^*}G_2 \xi
\\
&=&
P^n(D_{P^*}G_1^*+PD_{P^*}G_2) \xi
\\
&=&
P^nAD_{P^*} \xi \;\;[\text{ by Lemma \ref{tetra}}]
\\
&=& AP^nD_{P^*} \xi = A W^*(z^n \otimes \xi).
\end{eqnarray*}
Therefore we have $W^*(I \otimes G_1^* + M_z \otimes G_2)=A W^*$ on vectors of the form $z^n \otimes \xi$, for all $n \geq 0$ and $\xi \in \mathcal{D}_{P^*}$, which span $H^2(\mathbb{D})\otimes \mathcal{D}_{P^*}$ and hence we have $W^*(I \otimes G_1^* + M_z \otimes G_2)=A W^*$, which implies $W^*(I \otimes G_1^* + M_z \otimes G_2)W=A$. Therefore $A$ is unitarily equivalent to $P_{\mathcal{H}_P}(I \otimes G_1^* + M_z \otimes G_2)|_{\mathcal{H}_P}$.
Also
\begin{eqnarray*}
&&W^*(I \otimes G_2^* + M_z \otimes G_1)(z^n \otimes \xi)
\\
&=&
W^*(z^n \otimes G_2^* \xi) + W^*(z^{n+1} \otimes G_1 \xi)
\\
&=&
P^nD_{P^*}G_2^* \xi + P^{n+1}D_{P^*}G_1 \xi
\\
&=&
P^n(D_{P^*}G_2^*+PD_{P^*}G_1) \xi
\\
&=&
P^nBD_{P^*} \xi \;\;[\text{ by Lemma \ref{tetra}}]
\\
&=& BP^nD_{P^*} \xi = B W^*(z^n \otimes \xi).
\end{eqnarray*}
Hence $W^*(I \otimes G_2^* + M_z \otimes G_1)=B W^*$ on vectors of the form $z^n \otimes \xi$, for all $n \geq 0$ and $\xi \in \mathcal{D}_{P^*}$. So, for the same reason we have $W^*(I \otimes G_2^* + M_z \otimes G_1)=B W^*$. Therefore $B$ is unitarily equivalent to $P_{\mathcal{H}_P}(I \otimes G_2^* + M_z \otimes G_1)|_{\mathcal{H}_P}$. And
\begin{eqnarray*}
W^*(M_z \otimes I)(z^n \otimes \xi)=W^*(z^{n+1} \otimes \xi)= P^{n+1}D_{P^*}\xi= PW^*(z^n \otimes \xi).
\end{eqnarray*}
Therefore by the same argument as above, we get $P$ is unitarily equivalent to $P_{\mathcal{H}_P}(M_z \otimes I_{\mathcal{D}_{P^*}})|_{\mathcal{H}_P}$. Note that the unitary operator which unitarizes $A,B$ and $P$ to their model operators is $W: \mathcal{H} \to \mathcal{H}_P$.
\\
This completes the proof of the theorem.
\end{proof}
We end this section with an important result which gives a functional model for a special class of tetrablock contractions, viz., pure tetrablock isometries. This is a consequence of Theorem \ref{fm}. This is important because this gives a relation between the fundamental operators $G_1$ and $G_2$ of adjoint of a pure tetrablock isometry.
\begin{corollary}\label{lastcor}
Let $(A,B,P)$ be a pure tetrablock isometry. Then $(A,B,P)$ is unitarily equivalent to $(M_{G_1^*+G_2z}, M_{G_2^*+G_1z},M_z)$, where $G_1$ and $G_2$ are the fundamental operators of $(A^*,B^*,P^*)$. Moreover, $G_1$ and $G_2$ satisfy equation (\ref{Maa14}).
\end{corollary}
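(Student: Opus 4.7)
The plan is to derive both parts as immediate consequences of Theorem \ref{fm}. First I would observe that because $(A,B,P)$ is a tetrablock isometry, $P$ itself is an isometry, so $D_P=0$ and hence $\mathcal{D}_P=\{0\}$. This forces $M_{\Theta_P}(H^2(\mathbb{D})\otimes\mathcal{D}_P)=\{0\}$ and therefore $\mathcal{H}_P=H^2(\mathbb{D})\otimes\mathcal{D}_{P^*}$. Consequently the projection $P_{\mathcal{H}_P}$ appearing in Theorem \ref{fm} is the identity, and the compressions there become the honest operators $I\otimes G_1^*+M_z\otimes G_2$, $I\otimes G_2^*+M_z\otimes G_1$, and $M_z\otimes I_{\mathcal{D}_{P^*}}$ on $H^2(\mathbb{D})\otimes\mathcal{D}_{P^*}$. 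Under the canonical identification of $H^2(\mathbb{D})\otimes\mathcal{D}_{P^*}$ with $H^2_{\mathcal{D}_{P^*}}(\mathbb{D})$, these three operators are exactly $M_{G_1^*+G_2z}$, $M_{G_2^*+G_1z}$, and $M_z$, yielding the first assertion, with the unitary $W:\mathcal{H}\to H^2_{\mathcal{D}_{P^*}}(\mathbb{D})$ of Theorem \ref{fm} implementing the equivalence.

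For the moreover statement, I would transfer the commutativity of $A$ and $B$ (which holds because $(A,B,P)$ is a commuting triple) along the unitary equivalence to obtain $M_{G_1^*+G_2z}M_{G_2^*+G_1z}=M_{G_2^*+G_1z}M_{G_1^*+G_2z}$. Since the multiplication operators here are induced by operator-valued polynomials in $z$, commutativity of the operators is equivalent to coefficientwise equality of the two symbol products in $\mathcal{B}(\mathcal{D}_{P^*})[z]$. Multiplying out $(G_1^*+G_2z)(G_2^*+G_1z)$ and $(G_2^*+G_1z)(G_1^*+G_2z)$, the $z^0$ coefficient yields $G_1^*G_2^*=G_2^*G_1^*$, equivalently $[G_1,G_2]=0$; the $z^1$ coefficient yields $G_1^*G_1+G_2G_2^*=G_2^*G_2+G_1G_1^*$, equivalently $[G_1,G_1^*]=[G_2,G_2^*]$; and the $z^2$ coefficient merely repeats the first identity. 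These are precisely the two relations in (\ref{Maa14}).

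There is no substantive obstacle; the corollary is essentially bookkeeping on top of Theorem \ref{fm}. The only points requiring care are the identification between the tensor-product model $H^2(\mathbb{D})\otimes\mathcal{D}_{P^*}$ and the vector-valued Hardy space $H^2_{\mathcal{D}_{P^*}}(\mathbb{D})$, and the algebraic step of matching coefficients of the polynomial symbols. All of the conceptual content has already been absorbed into the proof of Theorem \ref{fm}; the present statement just specialises to the isometric case where the characteristic-function machinery collapses and the model operators themselves become multiplication operators on the whole of $H^2_{\mathcal{D}_{P^*}}(\mathbb{D})$.
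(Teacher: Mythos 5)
Your proposal is correct and follows essentially the same route as the paper: specialise Theorem \ref{fm} by noting that $\mathcal{D}_P=\{0\}$ for an isometry $P$, so $\mathcal{H}_P=H^2(\mathbb{D})\otimes\mathcal{D}_{P^*}$ and the compressions become genuine multiplication operators, then deduce (\ref{Maa14}) from the commutativity of the resulting triple. The only difference is that you write out the coefficient-matching in $(G_1^*+G_2z)(G_2^*+G_1z)=(G_2^*+G_1z)(G_1^*+G_2z)$ explicitly, which the paper leaves implicit.
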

\begin{proof}
Note that for an isometry $P$, the defect space $\mathcal{D}_P$ is zero, hence the charateristic function $\Theta_P$ is also zero. So for an isometry $P$, the space $\mathcal{H}_P$ becomes $H^2(\mathbb{D})\otimes \mathcal{D}_{P^*}$. So by Theorem \ref{fm}, we have the result. From the commutativity of the triple $(M_{G_1^*+G_2z}, M_{G_2^*+G_1z},M_z)$, it follows that $G_1$ and $G_2$ satisfy equation (\ref{Maa14}).
\end{proof}
\begin{remark}
In \cite{sir's tetrablock paper} (Theorem 5.10), it is showed that every pure tetrablock isometry $(A,B,P)$ on $\mathcal{H}$ is unitarily equivalent to $(M_{\tau_1^*+\tau_2z}, M_{\tau_2^*+\tau_1z},M_z)$ on $H^2_{\mathcal{E}}(\mathbb{D})$ for some $\tau_1, \tau_2 \in \mathcal{B}(\mathcal{E})$. Corollary \ref{lastcor} shows that the space $\mathcal{E}$ can be taken to $\mathcal{D}_{P^*}$ and the operators $\tau_1, \tau_2$ can be taken to be the fundamental operators of $(A^*,B^*,P^*)$.
\end{remark}

\section{A complete set of unitary invariant}
Given two contractions $P$ and $P'$ on Hilbert spaces $\mathcal{H}$ and $\mathcal{H'}$ respectively, we say that the characteristic functions of $P$ and $P'$ coincide if there are unitary operators $u: \mathcal{D}_P \to \mathcal{D}_{P'}$ and $u_{*}: \mathcal{D}_{P^*} \to \mathcal{D}_{{P'}^*}$ such that the following diagram commutes for all $z \in \mathbb{D}$,
$$
\begin{CD}
\mathcal{D}_P @>\Theta_P(z)>> \mathcal{D}_{P^*}\\
@Vu VV @VVu_{*} V\\
\mathcal{D}_{P'} @>>\Theta_{{P'}}(z)> \mathcal{D}_{{P'}^*}
\end{CD}.
$$
In \cite{Nagy-Foias}, Sz.-Nagy and Foias proved that the characteristic function of a c.n.u. contraction is a complete unitary invariant. In other words,
\begin{theorem}\label{nf}
Two completely non-unitary contractions are unitarily equivalent if and only
if their characteristic functions coincide.
\end{theorem}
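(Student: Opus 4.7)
The plan is to prove both implications separately, with the nontrivial direction resting entirely on the Sz.-Nagy--Foias functional model of a c.n.u.\ contraction.

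For the easy direction $(\Rightarrow)$, suppose $UP = P'U$ for a unitary $U:\mathcal{H}\to\mathcal{H}'$. Then $U(I - P^*P) = (I - {P'}^*P')U$, so by functional calculus applied to the square root one has $UD_P = D_{P'}U$; in particular $U$ restricts to a unitary $u: \mathcal{D}_P \to \mathcal{D}_{P'}$, and the symmetric argument with $UP^* = {P'}^*U$ yields a unitary $u_*: \mathcal{D}_{P^*}\to \mathcal{D}_{{P'}^*}$ with $UD_{P^*} = D_{{P'}^*}U$. Plugging these intertwining identities into the power series
$$
\Theta_P(z) = \bigl[-P + \sum_{n=0}^{\infty} z^{n+1} D_{P^*} P^n D_P\bigr]\big|_{\mathcal{D}_P}
$$
from (\ref{char}) and using $UP^n = {P'}^nU$ termwise gives $u_*\Theta_P(z) = \Theta_{P'}(z)u$ for all $z\in\mathbb{D}$, which is precisely coincidence.

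For the converse $(\Leftarrow)$, the key tool is the full Sz.-Nagy--Foias model. Writing $\Delta_P(\zeta) = (I - \Theta_P(\zeta)^*\Theta_P(\zeta))^{1/2}$ on $\mathcal{D}_P$, set
$$
\mathcal{K}_P = \bigl(H^2_{\mathcal{D}_{P^*}}(\mathbb{D})\oplus \overline{\Delta_P L^2_{\mathcal{D}_P}(\mathbb{T})}\bigr) \ominus \bigl\{\Theta_P w \oplus \Delta_P w : w\in H^2_{\mathcal{D}_P}(\mathbb{D})\bigr\}.
$$
The fundamental result of Chapter VI of \cite{Nagy-Foias} is that every c.n.u.\ contraction $P$ is unitarily equivalent to its model $S(\Theta_P) := P_{\mathcal{K}_P}(M_z\oplus M_\zeta)|_{\mathcal{K}_P}$, and the space $\mathcal{K}_P$ together with this operator depends only on the coincidence class of $\Theta_P$. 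Granting this, given coincidence data $(u,u_*)$, I would extend $u_*$ to $I_{H^2}\otimes u_*$ on $H^2_{\mathcal{D}_{P^*}}$, extend $u$ to $I_{H^2}\otimes u$ on $H^2_{\mathcal{D}_P}$, and analogously on the $L^2$ layers. Coincidence immediately intertwines multiplication by $\Theta_P$ with multiplication by $\Theta_{P'}$; combined with the identity $\Delta_{P'} u = u \Delta_P$ obtained by applying the continuous functional calculus to $u^*\Theta_{P'}^*\Theta_{P'} u = \Theta_P^*\Theta_P$, the direct-sum map carries the embedded subspace defining $\mathcal{K}_P$ onto the one defining $\mathcal{K}_{P'}$, descends to a unitary $\mathcal{K}_P \to \mathcal{K}_{P'}$, and intertwines $S(\Theta_P)$ with $S(\Theta_{P'})$. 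Composing with the two model unitaries produces the required unitary $\mathcal{H}\to\mathcal{H}'$ intertwining $P$ with $P'$.

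The genuinely hard step is the existence of the model unitary $\mathcal{H}\to\mathcal{K}_P$: this rests on the construction of the minimal isometric (then unitary) dilation of $P$, a Wold-type decomposition of its bilateral shift part, and the careful identification of the residual component with the $\overline{\Delta_P L^2}$ summand. In the \emph{pure} case $\Delta_P = 0$, the $L^2$ summand disappears, $\mathcal{K}_P$ collapses to the space $\mathcal{H}_P$ used in Section~4, and both the model construction (through the isometry $W$ of Section~4 together with Lemma~\ref{L0}) and the intertwining argument above become short---this is the setting of direct relevance to Theorem~\ref{fm} and to the tetrablock invariant produced in this section.
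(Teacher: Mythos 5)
The paper does not actually prove Theorem \ref{nf}: it is quoted from \cite{Nagy-Foias} as a known classical result, so there is no internal proof to compare yours against. Your sketch is essentially the standard Sz.-Nagy--Foias argument, and at the level of a sketch it is correct. The forward direction is complete: $UP=P'U$ and $UP^*={P'}^*U$ give $UD_P^2U^*=D_{P'}^2$, hence $UD_P=D_{P'}U$ by uniqueness of positive square roots (similarly for the starred defects), and termwise intertwining of the power series yields $u_*\Theta_P(z)=\Theta_{P'}(z)u$. For the converse, your reduction to the functional model and the identity $\Delta_{P'}u=u\Delta_P$ obtained from $u^*\Theta_{P'}^*\Theta_{P'}u=\Theta_P^*\Theta_P$ by continuous functional calculus are the right steps. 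Two caveats: first, the existence of the model unitary $\mathcal{H}\to\mathcal{K}_P$ is itself only granted, not proved, so what you have is a correct reduction to Chapter VI of \cite{Nagy-Foias} rather than a self-contained proof --- which puts you in exactly the same position as the paper, namely citing the hard content; second, coincidence is defined for $z\in\mathbb{D}$ while the residual summand $\overline{\Delta_P L^2_{\mathcal{D}_P}(\mathbb{T})}$ uses a.e.\ boundary values of $\Theta_P$, so one must pass to radial limits before applying the functional calculus on $\mathbb{T}$. Neither point is a genuine gap for this paper's purposes, and your closing remark is apt: in the pure case $\Delta_P=0$, $\mathcal{K}_P$ collapses to the space $\mathcal{H}_P$ of Section~4, and the intertwining argument is exactly the one reused in the propositions surrounding Theorem \ref{unitary inv}.
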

In this section, we give a complete set of unitary invariant for a pure tetrablock contraction.
\begin{proposition}
If two tetrablock contractions $(A,B,P)$ and $(A',B',P')$ defined on $\mathcal{H}$ and $\mathcal{H'}$ respectively are unitarily equivalent then so are their fundamental operators.
\end{proposition}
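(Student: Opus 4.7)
The plan is to transport the defining fundamental equations through the unitary implementing the equivalence of the triples, and then invoke uniqueness of solutions.

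Let $U:\mathcal{H}\to\mathcal{H}'$ be a unitary with $UA=A'U$, $UB=B'U$, $UP=P'U$. The first step is to produce a unitary between the defect spaces. Taking adjoints gives $U P^* = {P'}^* U$, hence
\[
U(I - P^*P) = (I - {P'}^*P')U,
\]
so $U D_P^{\,2}=D_{P'}^{\,2}U$. By uniqueness of the positive square root (or by functional calculus) this gives $U D_P = D_{P'} U$. Consequently $U$ sends $\overline{\mathrm{Ran}}\,D_P=\mathcal{D}_P$ onto $\mathcal{D}_{P'}$, so the restriction $u:=U|_{\mathcal{D}_P}:\mathcal{D}_P\to\mathcal{D}_{P'}$ is a unitary operator.

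Next I would conjugate the fundamental equation for $(A,B,P)$ by $U$. Starting from $A-B^*P=D_P F_1 D_P$, applying $U\,\cdot\, U^*$ on both sides yields on the left
\[
U(A-B^*P)U^* \;=\; UAU^* - (UBU^*)^*\,UPU^* \;=\; A' - {B'}^*P'.
\]
For the right-hand side, using $UD_P=D_{P'}U$ (so $U D_P U^* = D_{P'}$ on $\mathcal{H}'$, with the customary extension of $F_1$ by zero off $\mathcal{D}_P$) one gets
\[
U D_P F_1 D_P U^* \;=\; D_{P'}\,(uF_1u^*)\,D_{P'},
\]
where $uF_1u^*\in\mathcal{B}(\mathcal{D}_{P'})$. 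Combining,
\[
A'-{B'}^*P' \;=\; D_{P'}\,(uF_1u^*)\,D_{P'}.
\]
But the defining equation for the fundamental operator $F_1'$ of $(A',B',P')$ reads $A'-{B'}^*P'=D_{P'} F_1' D_{P'}$, and Theorem~3.5 of \cite{sir's tetrablock paper} guarantees uniqueness of such a solution on $\mathcal{D}_{P'}$. Hence $uF_1u^*=F_1'$. Running the same argument with $B-A^*P=D_P F_2 D_P$ gives $uF_2u^*=F_2'$, so the same unitary $u$ simultaneously intertwines both pairs of fundamental operators.

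The only subtlety to handle carefully is the domain issue: the operators $F_i$ act on $\mathcal{D}_P$ rather than on all of $\mathcal{H}$, so one must verify that $U\mathcal{D}_P=\mathcal{D}_{P'}$ (which the identity $UD_P=D_{P'}U$ supplies) and that conjugation by $U$ really gives an operator of the form $D_{P'}(\,\cdot\,)D_{P'}$ on $\mathcal{D}_{P'}$. Once this bookkeeping is in place, the proposition is immediate from the uniqueness clause of the fundamental equations. I do not expect any serious obstacle beyond this careful identification of spaces.
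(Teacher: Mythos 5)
Your proposal is correct and follows essentially the same route as the paper: derive $UD_P=D_{P'}U$ from $UD_P^2=D_{P'}^2U$, restrict $U$ to a unitary $u:\mathcal{D}_P\to\mathcal{D}_{P'}$, conjugate the fundamental equation to get $D_{P'}(uF_iu^*)D_{P'}=A'-{B'}^*P'$ (resp.\ $B'-{A'}^*P'$), and invoke uniqueness of the fundamental operators. The domain bookkeeping you flag is exactly the point the paper handles via the intertwining $\tilde{U}D_P=D_{P'}\tilde{U}$, so there is no gap.
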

\begin{proof}
Let $U:\mathcal{H} \to \mathcal{H'}$ be a unitary such that $UA=A'U, UB=B'U$ and $UP=P'U$.
Then we have
$$
UD_P^2=U(I-P^*P)=U-{P'}^*PU=D_{P'}^2U,
$$
which gives $UD_P=D_{P'}U$. Let $\tilde{U}=U|_{\mathcal{D}_P}$. Then note that $\tilde{U}\in \mathcal{B}(\mathcal{D}_P, \mathcal{D}_{P'})$ and $\tilde{U}D_P=D_{P'}\tilde{U}$. Let $F_1,F_2$ and $F_1',F_2'$ be fundamental operators of $(A,B,P)$ and $(A',B',P')$ respectively. Then
\begin{eqnarray*}
D_{P'}\tilde{U}F_1\tilde{U}^*D_{P'}=\tilde{U}D_PF_1D_P\tilde{U}^*=\tilde{U}(A-B^*P)D_P{\tilde{U}}^*=A'-B'^*P'=D_{P'}F_1'D_{P'}.
\end{eqnarray*}
Therefore we have $\tilde{U}F_1\tilde{U}^*=F_1'$. Similarly one can prove that $\tilde{U}F_2\tilde{U}^*=F_2'$.
\\
This completes the proof.
\end{proof}
The next result is a sort of converse to the previous proposition for pure tetrablock contractions.
\begin{proposition}
Let $(A,B,P)$ and $(A',B',P')$ be two pure tetrablock contractions defined on $\mathcal{H}$ and $\mathcal{H'}$ respectively. Suppose that the characteristic functions of $P$ and $P'$ coincide and the fundamental operators $(G_1,G_2)$ of $(A^*,B^*,P^*)$ and $(G_1',G_2')$ of $(A'^*,B'^*,P'^*)$ are unitarily equivalent by the same unitary that is involved in the coincidence of the characteristic functions of $P$ and $P'$. Then $(A,B,P)$ and $(A',B',P')$ are unitarily equivalent.
\end{proposition}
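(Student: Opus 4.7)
The plan is to pass to the concrete functional model supplied by Theorem \ref{fm} and transport everything by the unitary $u_*$ coming from the coincidence of characteristic functions. Writing the models of $(A,B,P)$ and $(A',B',P')$ on $\mathcal{H}_P\subseteq H^2(\mathbb{D})\otimes\mathcal{D}_{P^*}$ and $\mathcal{H}_{P'}\subseteq H^2(\mathbb{D})\otimes\mathcal{D}_{{P'}^*}$ respectively, the obvious candidate for the implementing unitary is the restriction of $I_{H^2(\mathbb{D})}\otimes u_*$ to $\mathcal{H}_P$. First I would verify this map actually lands in $\mathcal{H}_{P'}$: the coincidence assumption reads $u_*\,\Theta_P(z)=\Theta_{P'}(z)\,u$ for every $z\in\mathbb{D}$, which is precisely the identity $(I\otimes u_*)M_{\Theta_P}=M_{\Theta_{P'}}(I\otimes u)$. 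Since both $I\otimes u$ and $I\otimes u_*$ are unitaries, this sends $M_{\Theta_P}(H^2(\mathbb{D})\otimes\mathcal{D}_P)$ onto $M_{\Theta_{P'}}(H^2(\mathbb{D})\otimes\mathcal{D}_{P'})$; taking orthogonal complements inside the respective ambient spaces then yields a unitary $V:\mathcal{H}_P\to\mathcal{H}_{P'}$ given by $V=(I\otimes u_*)|_{\mathcal{H}_P}$.

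Next I would check the three intertwining relations with the model triples. Because $u_*$ is a scalar operator on $\mathcal{D}_{P^*}$, it trivially commutes with $M_z$, so
\[
V\,(M_z\otimes I_{\mathcal{D}_{P^*}})|_{\mathcal{H}_P}=(M_z\otimes I_{\mathcal{D}_{{P'}^*}})|_{\mathcal{H}_{P'}}\,V,
\]
giving the intertwining for the third coordinate. For the first coordinate, the hypothesis $u_*G_iu_*^{*}=G_i'$ for $i=1,2$ rearranges to $u_*G_1^{*}=G_1'^{*}u_*$ and $u_*G_2=G_2'u_*$; multiplying out the scalar/operator tensors gives
\[
(I\otimes u_*)(I\otimes G_1^{*}+M_z\otimes G_2)=(I\otimes G_1'^{*}+M_z\otimes G_2')(I\otimes u_*),
\]
and the analogous identity with the roles of $G_1,G_2$ swapped. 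Compressing to $\mathcal{H}_P$ on the right (and using that the compressions intertwine with $V$ because $V$ is the restriction of a unitary that maps $\mathcal{H}_P$ onto $\mathcal{H}_{P'}$) produces exactly the intertwinings of the model operators for $A,B$ with those for $A',B'$.

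Finally, composing $V$ with the unitaries $W:\mathcal{H}\to\mathcal{H}_P$ and $W':\mathcal{H}'\to\mathcal{H}_{P'}$ produced in the proof of Theorem \ref{fm}, the operator $U:={W'}^{*}V\,W:\mathcal{H}\to\mathcal{H}'$ is a unitary intertwining $(A,B,P)$ with $(A',B',P')$, which is the desired conclusion. The only mildly delicate step is the first one, namely that $I\otimes u_*$ actually carries $\mathcal{H}_P$ onto $\mathcal{H}_{P'}$; everything else reduces to a direct computation once one has the coincidence identity of characteristic functions in the form $(I\otimes u_*)M_{\Theta_P}=M_{\Theta_{P'}}(I\otimes u)$ and the hypothesized relations $u_*G_iu_*^{*}=G_i'$.
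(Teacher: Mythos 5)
Your proposal is correct and follows essentially the same route as the paper: the paper likewise defines $U_*=I\otimes u_*$, uses the coincidence identity $u_*\Theta_P(z)=\Theta_{P'}(z)u$ to show $U_*$ carries $\operatorname{Ran}M_{\Theta_P}$ onto $\operatorname{Ran}M_{\Theta_{P'}}$ and hence $\mathcal{H}_P$ onto $\mathcal{H}_{P'}$, then uses $u_*G_i=G_i'u_*$ to intertwine $I\otimes G_1^*+M_z\otimes G_2$, $I\otimes G_2^*+M_z\otimes G_1$ and $M_z\otimes I$ with their primed counterparts before compressing and invoking the functional model. The only cosmetic difference is that the paper verifies the intertwinings on adjoints to exhibit the co-invariance of $\mathcal{H}_{P'}$ explicitly, whereas you intertwine the operators directly and pass to compressions, which amounts to the same computation.
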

\begin{proof}
Let $u: \mathcal{D}_P \to \mathcal{D}_{P'}$ and $u_{*}: \mathcal{D}_{P^*} \to \mathcal{D}_{{P'}^*}$ be unitary operators such that
$$
u_{*}G_1=G_1'u_{*}, \; u_{*}G_2=G_2'u_{*} \text{ and } u_{*} \Theta_P(z) = \Theta_{P'}(z) u \text{ holds for all $z \in \mathbb{D}$}.
$$
The unitary operator $u_{*}: \mathcal{D}_{P^*} \to \mathcal{D}_{{P'}^*}$ induces a unitary operator $U_{*}: H^2(\mathbb{D})\otimes \mathcal{D}_{P^*} \to H^2(\mathbb{D})\otimes \mathcal{D}_{P'^*}$ defined by $U_{*}(z^n \otimes \xi)=(z^n \otimes u_{*}\xi)$ for all $\xi \in \mathcal{D}_{P^*}$ and $n \geq 0$.
Note that
$$
U_{*}(M_{\Theta_P}f(z))=u_{*}\Theta_{P}(z)f(z)=\Theta_{P'}(z)uf(z)=M_{\Theta_{P'}}(uf(z)),
$$
for all $f \in H^2(\mathbb{D})\otimes \mathcal{D}_{P}$ and $z \in \mathbb{D}$. Hence $U_{*}$ takes $RanM_{\Theta_P}$ onto $RanM_{\Theta_{P'}}$. Since $U_{*}$ is unitary, we have
$$
U_{*}(\mathcal{H}_P) = U_{*}((RanM_{\Theta_P})^\bot)=(U_{*}RanM_{\Theta_P})^{\bot}=(RanM_{\Theta_{P'}})^\bot=\mathcal{H}_{P'}.
$$
By definition of $U_{*}$ we have
\begin{eqnarray*}
U_{*}(I \otimes G_1^* + M_z \otimes G_2)^*&=&(I \otimes u_{*})(I \otimes G_1 + M_z^* \otimes G_2^*)
\\
&=&
I \otimes u_{*} G_1 + M_z^* \otimes u_{*} G_2^*
\\
&=&
I \otimes G_1'u_{*} + M_z^* \otimes G_2'^*u_{*}
\\
&=&
(I \otimes G_1' + M_z^* \otimes G_2'^*)(I \otimes u_{*})=(I \otimes G_1'^* + M_z \otimes G_2')^*U_{*}.
\end{eqnarray*}
Similar calculation gives us
$$U_{*}(I \otimes G_2^* + M_z \otimes G_1)^*=(I \otimes G_2'^* + M_z \otimes G_1')^*U_{*}.$$
Therefore $\mathcal{H}_{P'}=U_{*}(\mathcal{H}_P)$ is a co-invariant subspace of $(I \otimes G_1'^* + M_z \otimes G_2')$ and $(I \otimes G_2'^* + M_z \otimes G_1')$.
Hence
$$
P_{\mathcal{H}_P}(I \otimes G_1^* + M_z \otimes G_2)|_{\mathcal{H}_P} \cong P_{\mathcal{H}_{P'}}(I \otimes G_1'^* + M_z \otimes G_2')|_{\mathcal{H}_{P'}}
$$
and
$$
P_{\mathcal{H}_P}(I \otimes G_2^* + M_z \otimes G_1)|_{\mathcal{H}_P} \cong P_{\mathcal{H}_{P'}}(I \otimes G_2'^* + M_z \otimes G_1')|_{\mathcal{H}_{P'}}
$$
and the unitary operator which unitarizes them is $U_{*}|_{\mathcal{H}_P}:\mathcal{H}_P \to \mathcal{H}_{P'}$.
\\
And also by definition of $U_{*}$ we have
$$
U_{*}(M_z \otimes I_{\mathcal{D}_{P^*}})=(I \otimes u_{*})(M_z \otimes I_{\mathcal{D}_{P^*}})=(M_z \otimes I_{\mathcal{D}_{P'^*}})(I \otimes u_{*})=(M_z \otimes I_{\mathcal{D}_{P'^*}})U_{*}.
$$
So $P_{\mathcal{H}_P}(M_z \otimes I_{\mathcal{D}_{P^*}})|_{\mathcal{H}_P} \cong P_{\mathcal{H}_{P'}}(M_z \otimes I_{\mathcal{D}_{P'^*}})|_{\mathcal{H}_{P'}}$ and the same unitary $U_{*}|_{\mathcal{H}_P}:\mathcal{H}_P \to \mathcal{H}_{P'}$ unitarizes them. Therefore $(A,B,P) \cong (A',B',P')$.
\\
This completes the proof of the theorem.

\end{proof}
Combining the last two propositions and Theorem \ref{nf}, we get the following theorem which is the main result of this section.
\begin{theorem}\label{unitary inv}
 Let $(A,B,P)$ and $(A',B',P')$ be two pure tetrablock contractions defined on $\mathcal{H}$ and $\mathcal{H'}$ respectively. Suppose $(G_1,G_2)$ and $(G_1',G_2')$ are fundamental operators of $(A^*,B^*,P^*)$ and $(A'^*,B'^*,P'^*)$ respectively. Then $(A,B,P)$ is unitarily equivalent to $(A',B',P')$ if and only if the characteristic functions of $P$ and $P'$ coincide and $(G_1,G_2)$ is unitarily equivalent to $(G_1',G_2')$ by the same unitary that is involved in the coincidence of the characteristic functions of $P$ and $P'$.
\end{theorem}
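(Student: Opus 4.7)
The plan is to derive Theorem~\ref{unitary inv} by synthesizing the two propositions immediately preceding it with Theorem~\ref{nf}; the substantive point is to ensure that a \emph{single} unitary simultaneously witnesses the coincidence of the characteristic functions and the equivalence of the fundamental operators.

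For the forward implication, I would begin with a unitary $U:\mathcal{H}\to\mathcal{H}'$ satisfying $UA=A'U$, $UB=B'U$ and $UP=P'U$. Since $U$ intertwines the pure (hence c.n.u.) contractions $P$ and $P'$, Theorem~\ref{nf} delivers the coincidence of $\Theta_P$ and $\Theta_{P'}$; moreover, by the very computation used in the first proposition, one has $UD_P=D_{P'}U$ and, by its analogue obtained using~(\ref{Maa10}), also $UD_{P^*}=D_{{P'}^*}U$, so the unitaries realizing that coincidence may be taken to be $u:=U|_{\mathcal{D}_P}$ and $u_*:=U|_{\mathcal{D}_{P^*}}$. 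Now applying the first proposition to the adjoint triples $(A^*,B^*,P^*)$ and $({A'}^*,{B'}^*,{P'}^*)$, which are unitarily equivalent via the same $U$, yields $u_*G_i=G_i'u_*$ for $i=1,2$; the crucial observation is that the unitary furnished by that proposition is literally $U|_{\mathcal{D}_{P^*}}$, hence agrees with the $u_*$ already exhibited.

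For the converse implication nothing further is needed: the hypothesis provides unitaries $u$ and $u_*$ realizing both the coincidence of $\Theta_P$ with $\Theta_{P'}$ and the intertwinings $u_*G_i=G_i'u_*$, which is precisely the input consumed by the second proposition, whose conclusion is the desired $(A,B,P)\cong(A',B',P')$.

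The only delicate point, and the step I expect to require the most care, is the forward direction's matching of unitaries: one must argue that the unitary transporting the fundamental operators of the adjoint triples is the \emph{same} $u_*$ appearing in the coincidence of the characteristic functions. This is resolved by the observation that both arise canonically as the restriction of the ambient unitary $U$ to the defect space $\mathcal{D}_{P^*}$, so no independent choice needs to be reconciled, and the two propositions can be chained together cleanly.
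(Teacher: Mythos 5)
Your proposal is correct and follows essentially the same route as the paper, which simply combines the two preceding propositions with Theorem~\ref{nf}. In fact you are slightly more careful than the paper on the one nontrivial point --- verifying via $UD_P=D_{P'}U$ and $UD_{P^*}=D_{{P'}^*}U$ that the unitary witnessing the coincidence of the characteristic functions and the one transporting the fundamental operators are both the restriction of the ambient $U$ --- which the paper leaves implicit.
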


\section{an example}
\subsection{Fundamental operators}
Consider the Hilbert space
$$
H^2(\mathbb{D}^2)=\{ f: \mathbb{D}^2 \to \mathbb{C}: f(z_1,z_2)=\sum_{i=0}^\infty\sum_{j=0}^\infty a_{ij}z_1^iz_2^j \text{ with } \sum_{i=0}^\infty\sum_{j=0}^\infty |a_{ij}|^2 < \infty \}
$$ with the inner product $\langle \sum_{i=0}^\infty\sum_{j=0}^\infty a_{ij}z_1^iz_2^j, \sum_{i=0}^\infty\sum_{j=0}^\infty b_{ij}z_1^iz_2^j \rangle
 = \sum_{i=0}^\infty\sum_{j=0}^\infty a_{ij}\bar{b_{ij}} $. Consider the commuting triple of operators $(M_{z_1},M_{z_2},M_{z_1z_2})$ on $H^2(\mathbb{D}^2)$. It can be easily checked with the help of Theorem 5.4 in \cite{sir's tetrablock paper}, that $(M_{z_1},M_{z_2},M_{z_1z_2})$ is a tetrablock unitary on $L^2(\mathbb{T}^2)$. Note that $(M_{z_1},M_{z_2},M_{z_1z_2})$ on $H^2(\mathbb{D}^2)$ is the restriction of the tetrablock unitary $(M_{z_1},M_{z_2},M_{z_1z_2})$ on $L^2(\mathbb{T}^2)$ to the common invariant subspace $H^2(\mathbb{D}^2)$ (naturally embedded) of $L^2(\mathbb{T}^2)$. Hence by definition, $(M_{z_1},M_{z_2},M_{z_1z_2})$ on $H^2(\mathbb{D}^2)$ is a tetrablock isometry. In this section, we calculate fundamental operators of the tetrablock co-isometry $(M_{z_1}^*,M_{z_2}^*,M_{z_1z_2}^*)$ on $H^2(\mathbb{D}^2)$. For notational convenience, we denote $M_{z_1}$,$M_{z_2}$ and $M_{z_1z_2}$ by $A,B$ and $P$ respectively.

Note that every element $f\in H^2(\mathbb{D}^2)$ has the form $f(z_1,z_2)=\sum_{i=0}^\infty\sum_{j=0}^\infty a_{ij}z_1^iz_2^j$ where $a_{ij}\in \mathbb{C},$ for all $i,j \geq 0$. So we can write $f$ in the matrix form
  $$ \left( \left( a_{ij} \right) \right)_{i,j=0}^\infty =
\begin{pmatrix}
a_{00} & a_{01} & a_{02} & \dots \\
a_{10} & a_{11} & a_{12} &\dots  \\
a_{20} & a_{21} & a_{22} & \dots \\
\vdots & \vdots & \vdots & \ddots
\end{pmatrix},
$$
where $(ij)$-th entry in the matrix, denotes the coefficient of $z_1^iz_2^j$ in $f(z_1,z_2)=\sum_{i=0}^\infty\sum_{j=0}^\infty a_{ij}z_1^iz_2^j.$
We shall write the matrix form instead of writing the series. In this notation,
 \begin{eqnarray}
 &&A( \; \left( \left( a_{ij} \right) \right)_{i,j=0}^\infty \; ) = \left( a_{(i-1)j} \right),\; B( \; \left( \left( a_{ij} \right) \right)_{i,j=0}^\infty \; ) = \left( a_{i(j-1)} \right)
 \\
 &&\mbox{ and } P( \; \left( \left( a_{ij} \right) \right)_{i,j=0}^\infty \; ) = \left( a_{(i-1)(j-1)} \right) \label{ABandP},
 \end{eqnarray}
 with the convention that $a_{ij}$ is zero if either $i$ or $j$ is negative.
 \begin{lemma}\label{ABP^*}
 The adjoints of the operators $A,B$ and $P$ are as follows.
 \begin{eqnarray*}
 &&A^*( \; \left( \left( a_{ij} \right) \right)_{i,j=0}^\infty \; ) = \left( a_{(i+1)j} \right),\; B^*( \; \left( \left( a_{ij} \right) \right)_{i,j=0}^\infty \; ) = \left( a_{i(j+1)} \right)
 \\
 &&\mbox{ and } P^*( \; \left( \left( a_{ij} \right) \right)_{i,j=0}^\infty \; ) = \left( a_{(i+1)(j+1)} \right).
 \end{eqnarray*}
 \end{lemma}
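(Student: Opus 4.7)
The plan is to compute each adjoint directly from the inner product. The space $H^2(\mathbb{D}^2)$ has an orthonormal basis $\{z_1^i z_2^j : i,j \geq 0\}$, and by the definition of the inner product, two elements $f = \sum a_{ij} z_1^i z_2^j$ and $g = \sum b_{ij} z_1^i z_2^j$ satisfy $\langle f, g\rangle = \sum_{i,j \geq 0} a_{ij} \overline{b_{ij}}$, so the matrix entry $a_{ij}$ is literally $\langle f, z_1^i z_2^j\rangle$. This reduces the problem to a reindexing.

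First, I would treat $A^*$. Using the formula $A(a_{ij}) = (a_{(i-1)j})$ from the preceding display (with the convention that $a_{(-1)j}=0$), one has
\[
\langle Af, g\rangle \;=\; \sum_{i \geq 1,\, j \geq 0} a_{(i-1)j}\,\overline{b_{ij}} \;=\; \sum_{k \geq 0,\, j \geq 0} a_{kj}\,\overline{b_{(k+1)j}}
\]
after substituting $k = i-1$. Matching this against $\langle f, A^*g\rangle = \sum_{i,j \geq 0} a_{ij}\,\overline{(A^*g)_{ij}}$, one reads off $(A^*g)_{ij} = b_{(i+1)j}$, which is exactly the claimed formula. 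The same argument, with $j-1$ in place of $i-1$, handles $B^*$; and the same argument with both indices shifted handles $P^*$, using $P(a_{ij}) = (a_{(i-1)(j-1)})$.

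There is no real obstacle here — the proof is an index bookkeeping exercise. The only thing worth being careful about is the boundary convention: in the definitions of $A, B, P$ the symbol $a_{ij}$ is set to $0$ when an index is negative, which is why restricting the sum for $\langle Af, g\rangle$ to $i \geq 1$ is legitimate and why no spurious terms appear after reindexing. Once this is stated once, the three cases are parallel, so I would write the computation for $A^*$ in full and simply remark that $B^*$ and $P^*$ follow mutatis mutandis.
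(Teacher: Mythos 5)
Your proposal is correct and is exactly the computation the paper has in mind: its entire proof reads ``This is a matter of easy inner product calculations,'' and your reindexing argument via $\langle Af,g\rangle=\langle f,A^*g\rangle$ with the boundary convention $a_{ij}=0$ for negative indices is precisely that calculation, written out. Nothing further is needed.
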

 \begin{proof}
 This is a matter of easy inner product calculations.
 \end{proof}
 \begin{lemma}\label{def space}
The defect space of $P^*$ in the matrix form is
$$\mathcal{D}_{P^*}
=
\{ \begin{pmatrix}
a_{00} & a_{01} & a_{02} & \dots \\
a_{10} & 0 & 0 &\dots  \\
a_{20} & 0 & 0 & \dots \\
\vdots & \vdots & \vdots & \ddots
\end{pmatrix}:  |a_{00}|^2 + \sum_{j=1}^\infty|a_{0j}|^2 + \sum_{j=1}^\infty|a_{j0}|^2 <
\infty\}.$$ The defect space in the function form is
$\overline{span}\{ 1, z_1^i,z_2^j: i,j \geq 1 \}$. The defect
operator for $P^*$ is $$D_{P^*}
\begin{pmatrix}
a_{00} & a_{01} & a_{02} & \dots \\
a_{10} & a_{11} & a_{12} &\dots  \\
a_{20} & a_{21} & a_{22} & \dots \\
\vdots & \vdots & \vdots & \ddots
\end{pmatrix}
=
\begin{pmatrix}
a_{00} & a_{01} & a_{02} & \dots \\
a_{10} & 0 & 0 &\dots  \\
a_{20} & 0 & 0 & \dots \\
\vdots & \vdots & \vdots & \ddots
\end{pmatrix}.
$$
\end{lemma}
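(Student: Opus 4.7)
The proof will be a direct computation using the explicit matrix-form formulas for $P$ and $P^{*}$ that were recorded in (\ref{ABandP}) and in Lemma \ref{ABP^*}. The plan is first to compute $PP^{*}$ in matrix form, then to observe that $I-PP^{*}$ is already an (orthogonal) projection, so that $D_{P^{*}}=(I-PP^{*})^{1/2}$ equals $I-PP^{*}$ itself; the description of $\mathcal{D}_{P^{*}}=\overline{\operatorname{Ran}}\,D_{P^{*}}$ then falls out by reading off the range of this projection.

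\medskip

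Concretely, for a matrix $((a_{ij}))_{i,j=0}^{\infty}$ representing an element of $H^{2}(\mathbb{D}^{2})$, Lemma \ref{ABP^*} gives $P^{*}((a_{ij}))=(a_{(i+1)(j+1)})$, and then applying $P$ via (\ref{ABandP}) yields
$$
PP^{*}\bigl((a_{ij})\bigr)=\bigl(b_{ij}\bigr),\qquad\text{where } b_{ij}=a_{ij}\text{ for } i,j\geq 1,\ \ b_{0j}=b_{i0}=0.
$$
Hence
$$
(I-PP^{*})\bigl((a_{ij})\bigr)=\bigl(c_{ij}\bigr),\qquad c_{00}=a_{00},\ c_{0j}=a_{0j},\ c_{i0}=a_{i0},\ c_{ij}=0\text{ for }i,j\geq 1.
$$
From this formula one reads immediately that $I-PP^{*}$ is self-adjoint and idempotent, i.e.\ the orthogonal projection onto the closed subspace $\mathcal{K}$ of matrices supported only on the first row and first column.

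\medskip

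Because $I-PP^{*}$ is a projection, its unique positive square root coincides with itself, so $D_{P^{*}}=(I-PP^{*})^{1/2}=I-PP^{*}$ is exactly the operator displayed in the statement. Consequently
$$
\mathcal{D}_{P^{*}}=\overline{\operatorname{Ran}}\,D_{P^{*}}=\mathcal{K},
$$
which is precisely the set of matrices $((a_{ij}))$ with $a_{ij}=0$ whenever $i\geq 1$ and $j\geq 1$, subject only to the square-summability condition $|a_{00}|^{2}+\sum_{j\geq 1}|a_{0j}|^{2}+\sum_{j\geq 1}|a_{j0}|^{2}<\infty$. Translating matrix entries back into monomials $a_{ij}z_{1}^{i}z_{2}^{j}$, the surviving monomials are exactly $1$, $z_{1}^{i}$ for $i\geq 1$, and $z_{2}^{j}$ for $j\geq 1$, giving the function-form description $\mathcal{D}_{P^{*}}=\overline{\operatorname{span}}\{1,z_{1}^{i},z_{2}^{j}:i,j\geq 1\}$.

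\medskip

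There is essentially no obstacle; the only point requiring a line of care is the verification that $I-PP^{*}$ is idempotent (so that taking the square root is trivial), and this is immediate from the two-line formula for $PP^{*}$ above. Everything else is bookkeeping of indices in the matrix representation.
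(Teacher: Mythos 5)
Your proposal is correct and follows essentially the same route as the paper: the paper's (very terse) proof likewise rests on the observation that, $P$ being an isometry, $I-PP^{*}$ is the orthogonal projection onto $\operatorname{Ran}(P)^{\perp}$, so that $D_{P^{*}}$ equals this projection and the defect space is read off from the formula for $P$ in matrix form. You merely make explicit the computation of $PP^{*}$ and the idempotency check that the paper leaves to the reader.
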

\begin{proof}
Since $P$ is an isometry, $D_{P^*}$ is a projection onto $Range(P)^{\perp} = H^2(\mathbb{D}^2) \ominus Range(P)$. The rest follows from the formula for $P$ in (\ref{ABandP}).
\end{proof}
\begin{definition}\label{funda}
Define $G_1,G_2:\mathcal{D}_{P^*} \to \mathcal{D}_{P^*}$ by
\begin{eqnarray*} \label{B}
G_1\begin{pmatrix}
a_{00} & a_{01} & a_{02} & \dots \\
a_{10} & 0 & 0 &\dots  \\
a_{20} & 0 & 0 & \dots \\
\vdots & \vdots & \vdots & \ddots
\end{pmatrix}
=
\begin{pmatrix}
a_{10} & 0 & 0 & \dots \\
a_{20} & 0 & 0 &\dots  \\
a_{30} & 0 & 0 & \dots \\
\vdots & \vdots & \vdots & \ddots
\end{pmatrix} \text{ and}
\end{eqnarray*}
\begin{eqnarray*} \label{B}
G_2\begin{pmatrix}
a_{00} & a_{01} & a_{02} & \dots \\
a_{10} & 0 & 0 &\dots  \\
a_{20} & 0 & 0 & \dots \\
\vdots & \vdots & \vdots & \ddots
\end{pmatrix}
=
\begin{pmatrix}
a_{01} & a_{02} & a_{03} &\dots  \\
0 & 0 & 0 & \dots \\
0 & 0 & 0 & \dots \\
\vdots & \vdots & \vdots & \ddots
\end{pmatrix},
\end{eqnarray*}
for all $a_{j0}, a_{0j} \in \mathbb{C}, j=0,1,2,\dots \text{ with } |a_{00}|^2 + \sum_{j=1}^\infty|a_{0j}|^2 + \sum_{j=1}^\infty|a_{j0}|^2 < \infty$.
\end{definition}
\begin{lemma}
The operators $G_1$ and $G_2$ as defined in Definition \ref{funda}, is the fundamental operators of $(A^*,B^*,P^*)$.
\end{lemma}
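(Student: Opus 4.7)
The plan is to verify directly that the operators $G_1$ and $G_2$ defined in Definition \ref{funda} satisfy the fundamental equations (\ref{Maa13}) for $(A^*,B^*,P^*)$:
$$A^* - B P^* = D_{P^*} G_1 D_{P^*}, \qquad B^* - A P^* = D_{P^*} G_2 D_{P^*}.$$
Once these two identities are checked, the uniqueness of solutions to the fundamental equations (Theorem 3.5 of \cite{sir's tetrablock paper}) will identify $G_1,G_2$ as the fundamental operators of $(A^*,B^*,P^*)$.

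Using the matrix formulas (\ref{ABandP}) together with Lemma \ref{ABP^*}, I would compute the left-hand sides on an arbitrary $f = ((a_{ij}))_{i,j \geq 0} \in H^2(\mathbb{D}^2)$. The composition $BP^*$ drops the first row and column of $f$ (via $P^*$) and then prepends a zero first column (via $B$); thus its $(i,j)$-entry equals $a_{(i+1)j}$ for $j \geq 1$ and $0$ for $j=0$. Since $A^*(f)$ has $(i,j)$-entry $a_{(i+1)j}$ everywhere, the difference $A^* - BP^*$ annihilates all columns with $j \geq 1$ and leaves precisely the first column $(a_{10}, a_{20}, a_{30}, \ldots)^{T}$ of $f$. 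A symmetric computation shows that $B^* - AP^*$ annihilates all rows with $i \geq 1$ and leaves only the first row $(a_{01}, a_{02}, a_{03}, \ldots)$.

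Next I would compute $D_{P^*} G_1 D_{P^*}(f)$ in three steps: by Lemma \ref{def space}, $D_{P^*}(f)$ is the ``cross'' consisting of the first row and first column of $f$; applying $G_1$ as in Definition \ref{funda} produces the matrix whose only nonzero entries are $a_{10}, a_{20}, a_{30}, \ldots$ sitting in the first column; and this output already lies in $\mathcal{D}_{P^*}$, so the outer $D_{P^*}$ acts as the identity on it. The result matches $(A^*-BP^*)(f)$ term by term. The analogous computation of $D_{P^*} G_2 D_{P^*}(f)$ yields the matrix whose only nonzero entries form the first row $(a_{01}, a_{02}, a_{03}, \ldots)$, which matches $(B^*-AP^*)(f)$.

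There is no conceptual obstacle here; the argument is a direct verification. The only care required is bookkeeping of index shifts, in particular the boundary convention that $a_{ij}=0$ whenever $i$ or $j$ is negative, which is what makes the contribution of $BP^*$ vanish in the $j=0$ column (and of $AP^*$ in the $i=0$ row) and thereby exposes exactly the first column (respectively first row) of $f$ as the output of $A^*-BP^*$ (respectively $B^*-AP^*$).
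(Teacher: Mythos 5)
Your proposal is correct and follows essentially the same route as the paper: both verify directly, via the matrix formulas for $A,B,P$ and their adjoints, that $A^*-BP^*$ and $B^*-AP^*$ extract the first column $(a_{10},a_{20},\dots)^{T}$ and first row $(a_{01},a_{02},\dots)$ respectively, and that $D_{P^*}G_1D_{P^*}$ and $D_{P^*}G_2D_{P^*}$ produce the same outputs, using that $D_{P^*}$ is the projection onto the ``cross'' subspace. Your additional appeal to the uniqueness of solutions of the fundamental equations is a harmless (and correct) way to finish; the paper leaves it implicit.
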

\begin{proof}
To show that $G_1$ and $G_2$ is the fundamental operators of $(A^*,B^*,P^*)$, we have to show that $G_1$ and $G_2$ satisfy the fundamental equation $A^*-BP^*=D_{P^*}G_1D_{P^*}$ and $B^*-AP^*=D_{P^*}G_2D_{P^*}$. Using Lemma \ref{ABP^*}, we get
\begin{eqnarray*}
&&(A^*-BP^*)
\begin{pmatrix}
a_{00} & a_{01} & a_{02} & \dots \\
a_{10} & a_{11} & a_{12} &\dots  \\
a_{20} & a_{21} & a_{22} & \dots \\
\vdots & \vdots & \vdots & \ddots
\end{pmatrix}
\\
&=&
\begin{pmatrix}
a_{10} & a_{11} & a_{12} &\dots  \\
a_{20} & a_{21} & a_{22} & \dots \\
a_{30} & a_{31} & a_{32} & \dots \\
\vdots & \vdots & \vdots & \ddots
\end{pmatrix}
-
B
\begin{pmatrix}
a_{11} & a_{12} & a_{13} & \dots \\
a_{21} & a_{22} & a_{23} &\dots  \\
a_{31} & a_{32} & a_{33} & \dots \\
\vdots & \vdots & \vdots & \ddots
\end{pmatrix}
\\
&=&
\begin{pmatrix}
a_{10} & a_{11} & a_{12} &\dots  \\
a_{20} & a_{21} & a_{22} & \dots \\
a_{30} & a_{31} & a_{32} & \dots \\
\vdots & \vdots & \vdots & \ddots
\end{pmatrix}
-
\begin{pmatrix}
0 & a_{11} & a_{12} &  \dots \\
0 & a_{21} & a_{22} &  \dots  \\
0 & a_{31} & a_{32} &  \dots \\
\vdots & \vdots & \vdots & \ddots
\end{pmatrix}
=
\begin{pmatrix}
a_{10} & 0 & 0 &\dots  \\
a_{20} & 0 & 0 & \dots \\
a_{30} & 0 & 0 & \dots \\
\vdots & \vdots & \vdots & \ddots
\end{pmatrix}
\end{eqnarray*}
and
\begin{eqnarray*}
&&(B^*-AP^*)
\begin{pmatrix}
a_{00} & a_{01} & a_{02} & \dots \\
a_{10} & a_{11} & a_{12} &\dots  \\
a_{20} & a_{21} & a_{22} & \dots \\
\vdots & \vdots & \vdots & \ddots
\end{pmatrix}
\\
&=&
\begin{pmatrix}
a_{01} & a_{02} & a_{03} & \dots  \\
a_{11} & a_{12} & a_{13} & \dots \\
a_{21} & a_{22} & a_{23} & \dots \\
\vdots & \vdots & \vdots & \ddots
\end{pmatrix}
-
A
\begin{pmatrix}
a_{11} & a_{12} & a_{13} & \dots \\
a_{21} & a_{22} & a_{23} & \dots  \\
a_{31} & a_{32} & a_{33} & \dots \\
\vdots & \vdots & \vdots & \ddots
\end{pmatrix}
\\
&=&
\begin{pmatrix}
a_{01} & a_{02} & a_{03} & \dots  \\
a_{11} & a_{12} & a_{13} & \dots \\
a_{21} & a_{22} & a_{23} & \dots \\
\vdots & \vdots & \vdots & \ddots
\end{pmatrix}
-
\begin{pmatrix}
0  &  0 & 0 & \dots \\
a_{11} & a_{12} & a_{13} & \dots \\
a_{21} & a_{22} & a_{23} &\dots  \\
\vdots & \vdots & \vdots & \ddots
\end{pmatrix}
=
\begin{pmatrix}
a_{01} & a_{02} & a_{03} &\dots  \\
0 & 0 & 0 & \dots \\
0 & 0 & 0 & \dots \\
\vdots & \vdots & \vdots & \ddots
\end{pmatrix}.
\end{eqnarray*}
Using Lemma \ref{def space} and Definition \ref{funda}, we get
\begin{eqnarray*}
D_{P^*}G_1D_{P^*}
\begin{pmatrix}
a_{00} & a_{01} & a_{02} & \dots \\
a_{10} & a_{11} & a_{12} &\dots  \\
a_{20} & a_{21} & a_{22} & \dots \\
\vdots & \vdots & \vdots & \ddots
\end{pmatrix} =
D_{P^*}G_1
\begin{pmatrix}
a_{00} & a_{01} & a_{02} & \dots \\
a_{10} & 0 & 0 &\dots  \\
a_{20} & 0 & 0 & \dots \\
\vdots & \vdots & \vdots & \ddots
\end{pmatrix}
=
\begin{pmatrix}
a_{10} & 0 & 0 & \dots \\
a_{20} & 0 & 0 &\dots  \\
a_{30} & 0 & 0 & \dots \\
\vdots & \vdots & \vdots & \ddots
\end{pmatrix} \mbox{ and }
\\
D_{P^*}G_2D_{P^*}
\begin{pmatrix}
a_{00} & a_{01} & a_{02} & \dots \\
a_{10} & a_{11} & a_{12} &\dots  \\
a_{20} & a_{21} & a_{22} & \dots \\
\vdots & \vdots & \vdots & \ddots
\end{pmatrix} =
D_{P^*}G_2
\begin{pmatrix}
a_{00} & a_{01} & a_{02} & \dots \\
a_{10} & 0 & 0 &\dots  \\
a_{20} & 0 & 0 & \dots \\
\vdots & \vdots & \vdots & \ddots
\end{pmatrix}
=
\begin{pmatrix}
a_{01} & a_{02} & a_{03} &\dots  \\
0 & 0 & 0 & \dots \\
0 & 0 & 0 & \dots \\
\vdots & \vdots & \vdots & \ddots
\end{pmatrix}.
\end{eqnarray*}
Therefore, $G_1$ and $G_2$ are fundamental operators of $(A^*,B^*,P^*)$.
\end{proof}
\subsection{Explicit unitary equivalence}
From Corollary \ref{lastcor}, we now know that if $(A,B,P)$ is a pure tetrablock isometry, then $(A,B,P)$ is unitarily equivalent to $(M_{G_1^*+G_2z}, M_{G_2^*+G_1z},M_z)$, where $G_1$ and $G_2$ are the fundamental operators of $(A^*,B^*,P^*)$. The operator $M_{z_1z_2}$ on $H^2(\mathbb{D}^2)$ is a pure contraction as can be checked from the formula of $P^*$ in Lemma \ref{ABP^*}. In the final theorem of this section, we find the unitary operator which implement the unitary equivalence of the pure tetrablock isometry $(A,B,P)$ on $H^2(\mathbb{D}^2)$.
\begin{theorem}
The operator $U: H^2(\mathbb{D}^2) \to H^2_{\mathcal{D}_{P^*}}(\mathbb{D})$ defined by
\begin{eqnarray}\label{uni}
Uf(z)=D_{P^*}(I - zP^*)^{-1}f, \text{ for all } f \in H^2(\mathbb{D}^2) \text{ and } z \in \mathbb{D}
\end{eqnarray}
is a unitary operator and satisfies $U^*(M_{G_1^*+G_2z}, M_{G_2^*+G_1z},M_z)U=(A,B,P)$.
\end{theorem}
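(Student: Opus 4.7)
The plan is to recognize that $U$ is precisely the Sz.-Nagy--Foias isometric embedding $W:\mathcal{H}\to H^2(\mathbb{D})\otimes\mathcal{D}_{P^*}$ from Section 4, written in its functional form via the canonical identification $H^2(\mathbb{D})\otimes\mathcal{D}_{P^*}\cong H^2_{\mathcal{D}_{P^*}}(\mathbb{D})$ given by $z^n\otimes\xi\leftrightarrow z^n\xi$. Expanding the geometric series yields
$$Uf(z)=D_{P^*}(I-zP^*)^{-1}f=\sum_{n=0}^{\infty}z^nD_{P^*}{P^*}^nf,$$
which matches the definition $W(f)=\sum_{n\ge0}z^n\otimes D_{P^*}{P^*}^nf$ term by term. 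Hence every claim about $U$ can be read off from the corresponding statement for $W$.

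First I would verify that $U$ is a well-defined isometry. Using $D_{P^*}^2=I-PP^*$, a telescoping argument gives
$$\|Uf\|^2=\sum_{n=0}^{\infty}\|D_{P^*}{P^*}^nf\|^2=\sum_{n=0}^{\infty}\bigl(\|{P^*}^nf\|^2-\|{P^*}^{n+1}f\|^2\bigr)=\|f\|^2-\lim_{N\to\infty}\|{P^*}^{N+1}f\|^2,$$
and the last limit is zero because $P=M_{z_1z_2}$ is a pure contraction, as remarked just before the theorem. To show surjectivity, I note that $P$ is in fact a pure \emph{isometry}: being an isometry, $\mathcal{D}_P=\{0\}$, so the characteristic function $\Theta_P$ vanishes and $M_{\Theta_P}(H^2(\mathbb{D})\otimes\mathcal{D}_P)=\{0\}$. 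Lemma \ref{L0} then reduces to $WW^*=I$, so $W$, and therefore $U$, is unitary.

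For the intertwining identity, I would invoke Theorem \ref{fm}. Since $\mathcal{H}_P=H^2(\mathbb{D})\otimes\mathcal{D}_{P^*}$ in this setting, the compression projections $P_{\mathcal{H}_P}$ are the identity, and the theorem gives
$$W^*(I\otimes G_1^*+M_z\otimes G_2)W=A,\quad W^*(I\otimes G_2^*+M_z\otimes G_1)W=B,\quad W^*(M_z\otimes I_{\mathcal{D}_{P^*}})W=P.$$
Under the identification $H^2(\mathbb{D})\otimes\mathcal{D}_{P^*}\cong H^2_{\mathcal{D}_{P^*}}(\mathbb{D})$, the operator $I\otimes G_1^*+M_z\otimes G_2$ becomes the multiplication operator $M_{G_1^*+G_2z}$, and similarly for the other two; reading $W$ as $U$ then yields $U^*(M_{G_1^*+G_2z},M_{G_2^*+G_1z},M_z)U=(A,B,P)$.

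I do not anticipate any serious obstacle here: the bulk of the work was already done in Section 4. The only care needed is bookkeeping on the unitary identification between $H^2(\mathbb{D})\otimes\mathcal{D}_{P^*}$ and $H^2_{\mathcal{D}_{P^*}}(\mathbb{D})$, and a brief confirmation that $W$ being unitary in the pure-isometry case really delivers a concrete unitary implementation of the equivalence promised by Corollary \ref{lastcor}, rather than only abstract unitary equivalence.
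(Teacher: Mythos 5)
Your proof is correct, but it takes a genuinely different route from the paper's. You identify $U$ with the Sz.-Nagy--Foias embedding $W$ of Section 4 and then harvest everything from the general machinery: the telescoping computation for the isometry property (this part coincides with the paper), Lemma \ref{L0} with $\Theta_P=0$ for surjectivity, and Theorem \ref{fm} (whose proof exhibits $W$ as the implementing unitary, so you are right that you get the concrete intertwining $W^*(I\otimes G_1^*+M_z\otimes G_2)W=A$ and not merely abstract equivalence) for the intertwining relations. The paper instead proves the theorem entirely by hand inside the example: it writes $Uf$ in the matrix form $\sum_k z^k(a_{(i+k)(j+k)})$-style, reads off surjectivity from that explicit expansion, and verifies $U^*M_zU=P$ and $U^*M_{G_1^*+zG_2}U=M_{z_1}$ by direct coefficient manipulation (with the $M_{z_2}$ case declared similar). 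Your approach is shorter and makes transparent that the example is a literal instance of Corollary \ref{lastcor}; its only prerequisite is the preceding lemma of Section 6 confirming that the $G_1,G_2$ of Definition \ref{funda} really are the fundamental operators of $(A^*,B^*,P^*)$, which the paper has already established. The paper's computation, while longer, is self-contained and serves the section's stated pedagogical aim of displaying the model concretely; it also functions as an independent consistency check on Theorem \ref{fm}. Both arguments are sound.
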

\begin{proof}
We first prove that $U$ is one-one. Expanding the series in (\ref{uni}), we get
\begin{eqnarray}\label{sform}
Uf(z)=D_{P^*}f+zD_{P^*}P^*f+z^2D_{P^*}{P^*}^2f+ \cdots.
\end{eqnarray} Therefore
\begin{eqnarray*}
\lVert Uf \rVert^2_{H^2_{\mathcal{D}_{P^*}}(\mathbb{D})}&=&\lVert D_{P^*}f\rVert^2_{\mathcal{D}_{P^*}} + \lVert D_{P^*}P^*f\rVert^2_{\mathcal{D}_{P^*}} + \lVert D_{P^*}{P^*}^2f\rVert^2_{\mathcal{D}_{P^*}} +\cdots
\\
&=& \lVert f\rVert^2 -\lim_{n \to \infty} \lVert {P^*}^nf \rVert^2 = \lVert f \rVert^2_{H^2(\mathbb{D}^2)}.\;\; [\text{ since $P$ is pure.}]
\end{eqnarray*}
From the explicit series form of $U$ (equation \ref{sform}), we see that $U$ in matrix form of an element, is the following.
\begin{eqnarray*}
&&U
\begin{pmatrix}
a_{00} & a_{01} & a_{02} & \dots \\
a_{10} & a_{11} & a_{12} &\dots  \\
a_{20} & a_{21} & a_{22} & \dots \\
\vdots & \vdots & \vdots & \ddots
\end{pmatrix}
\\
&=&
\begin{pmatrix}
a_{00} & a_{01} & a_{02} & \dots \\
a_{10} & 0 & 0 &\dots  \\
a_{20} & 0 & 0 & \dots \\
\vdots & \vdots & \vdots & \ddots
\end{pmatrix}
+
z\begin{pmatrix}
a_{11} & a_{12} & a_{13} & \dots \\
a_{21} & 0 & 0 &\dots  \\
a_{31} & 0 & 0 & \dots \\
\vdots & \vdots & \vdots & \ddots
\end{pmatrix}
+
z^2
\begin{pmatrix}
a_{22} & a_{23} & a_{24} & \dots \\
a_{32} & 0 & 0 &\dots  \\
a_{42} & 0 & 0 & \dots \\
\vdots & \vdots & \vdots & \ddots
\end{pmatrix}
+
\cdots .
\end{eqnarray*} From this representation, it is easy to see that $U$ is onto $H^2_{\mathcal{D}_{P^*}}(\mathbb{D})$.
It can be easily checked by definition of $G_1$ and $G_2$ (Definition \ref{funda}) that
$$
G_1^*
\begin{pmatrix}
a_{00} & a_{01} & a_{02} & \dots \\
a_{10} & 0 & 0 &\dots  \\
a_{20} & 0 & 0 & \dots \\
\vdots & \vdots & \vdots & \ddots
\end{pmatrix}
=
\begin{pmatrix}
0 & 0 & 0 & 0 & \dots \\
a_{00} & 0 & 0 & 0 & \dots \\
a_{10} & 0 & 0 & 0 & \dots  \\
a_{20} & 0 & 0 &0 &  \dots \\
\vdots & \vdots & \vdots & \vdots &\ddots
\end{pmatrix}\text{ and}
$$
$$
G_2^*
\begin{pmatrix}
a_{00} & a_{01} & a_{02} & \dots \\
a_{10} & 0 & 0 &\dots  \\
a_{20} & 0 & 0 & \dots \\
\vdots & \vdots & \vdots & \ddots
\end{pmatrix}
=
\begin{pmatrix}
0 & a_{00} & a_{01} & a_{02} & \dots \\
0 & 0 & 0 & 0 &\dots  \\
0 & 0 & 0 & 0 & \dots \\
\vdots & \vdots & \vdots &\vdots & \ddots
\end{pmatrix}.
$$
To prove $U^*(M_{G_1^*+G_2z}, M_{G_2^*+G_1z},M_z)U=(A,B,P)$, we proceed by proving $U^*M_zU=P$ first.
\begin{eqnarray*}
&&U^*M_zU
\begin{pmatrix}
a_{00} & a_{01} & a_{02} & \dots \\
a_{10} & a_{11} & a_{12} &\dots  \\
a_{20} & a_{21} & a_{22} & \dots \\
\vdots & \vdots & \vdots & \ddots
\end{pmatrix}
\\
&=&
U^*
\left(
z\begin{pmatrix}
a_{00} & a_{01} & a_{02} & \dots \\
a_{10} & 0 & 0 &\dots  \\
a_{20} & 0 & 0 & \dots \\
\vdots & \vdots & \vdots & \ddots
\end{pmatrix}
+
z^2\begin{pmatrix}
a_{11} & a_{12} & a_{13} & \dots \\
a_{21} & 0 & 0 &\dots  \\
a_{31} & 0 & 0 & \dots \\
\vdots & \vdots & \vdots & \ddots
\end{pmatrix}
+
z^3\begin{pmatrix}
a_{22} & a_{23} & a_{24} & \dots \\
a_{32} & 0 & 0 &\dots  \\
a_{42} & 0 & 0 & \dots \\
\vdots & \vdots & \vdots & \ddots
\end{pmatrix}
+ \cdots
\right)
\\
&=&
\begin{pmatrix}
0 & 0 & 0 & 0 & \dots \\
0 & a_{00} & a_{01} & a_{02} & \dots \\
0 & a_{10} & a_{11} & a_{12} &\dots  \\
0 & a_{20} & a_{21} & a_{22} & \dots \\
\vdots & \vdots & \vdots & \vdots& \ddots
\end{pmatrix}
=
P
\begin{pmatrix}
a_{00} & a_{01} & a_{02} & \dots \\
a_{10} & a_{11} & a_{12} &\dots  \\
a_{20} & a_{21} & a_{22} & \dots \\
\vdots & \vdots & \vdots & \ddots
\end{pmatrix}.
\end{eqnarray*}
Now to prove $M_{z_1}=U^*M_{G_1^*+zG_2}U$, we first calculate $M_{G_1^*+zG_2}U$.
\begin{eqnarray*}
&& M_{G_1^*+zG_2}U
\begin{pmatrix}
a_{00} & a_{01} & a_{02} & \dots \\
a_{10} & a_{11} & a_{12} &\dots  \\
a_{20} & a_{21} & a_{22} & \dots \\
\vdots & \vdots & \vdots & \ddots
\end{pmatrix}
\\
&=&M_{G_1^*+zG_2}
\left(
\begin{pmatrix}
a_{00} & a_{01} & a_{02} & \dots \\
a_{10} & 0 & 0 &\dots  \\
a_{20} & 0 & 0 & \dots \\
\vdots & \vdots & \vdots & \ddots
\end{pmatrix}
+
z\begin{pmatrix}
a_{11} & a_{12} & a_{13} & \dots \\
a_{21} & 0 & 0 &\dots  \\
a_{31} & 0 & 0 & \dots \\
\vdots & \vdots & \vdots & \ddots
\end{pmatrix}
+
z^2
\begin{pmatrix}
a_{22} & a_{23} & a_{24} & \dots \\
a_{32} & 0 & 0 &\dots  \\
a_{42} & 0 & 0 & \dots \\
\vdots & \vdots & \vdots & \ddots
\end{pmatrix}
+
\cdots
\right)
\end{eqnarray*}
\begin{eqnarray*}
&=&
\left(
\begin{pmatrix}
0 & 0 & 0 & 0 & \dots \\
a_{00} & 0 & 0 & 0 & \dots \\
a_{10} & 0 & 0 & 0 & \dots  \\
a_{20} & 0 & 0 &0 &  \dots \\
\vdots & \vdots & \vdots & \vdots &\ddots
\end{pmatrix}
+
z
\begin{pmatrix}
0 & 0 & 0 & 0 & \dots \\
a_{11} & 0 & 0 & 0 & \dots \\
a_{21} & 0 & 0 & 0 & \dots  \\
a_{31} & 0 & 0 &0 &  \dots \\
\vdots & \vdots & \vdots & \vdots &\ddots
\end{pmatrix}
+
z^2
\begin{pmatrix}
0 & 0 & 0 & 0 & \dots \\
a_{22} & 0 & 0 & 0 & \dots \\
a_{32} & 0 & 0 & 0 & \dots  \\
a_{42} & 0 & 0 &0 &  \dots \\
\vdots & \vdots & \vdots & \vdots &\ddots
\end{pmatrix}
+ \cdots
\right)
+
\\
&&
\left(
z
\begin{pmatrix}
a_{01} & a_{02} & a_{03} &\dots  \\
0 & 0 & 0 & \dots \\
0 & 0 & 0 & \dots \\
\vdots & \vdots & \vdots & \ddots
\end{pmatrix}
+
z^2
\begin{pmatrix}
a_{12} & a_{13} & a_{14} &\dots  \\
0 & 0 & 0 & \dots \\
0 & 0 & 0 & \dots \\
\vdots & \vdots & \vdots & \ddots
\end{pmatrix}
+
z^3
\begin{pmatrix}
a_{23} & a_{24} & a_{25} &\dots  \\
0 & 0 & 0 & \dots \\
0 & 0 & 0 & \dots \\
\vdots & \vdots & \vdots & \ddots
\end{pmatrix}
+ \cdots
\right)
\\
&=&
\left(
\begin{pmatrix}
0 & 0 & 0 & 0 & \dots \\
a_{00} & 0 & 0 & 0 & \dots \\
a_{10} & 0 & 0 & 0 & \dots  \\
a_{20} & 0 & 0 &0 &  \dots \\
\vdots & \vdots & \vdots & \vdots &\ddots
\end{pmatrix} +
z
\begin{pmatrix}
a_{01} & a_{02} & a_{03} & a_{04} & \dots \\
a_{11} & 0 & 0 & 0 & \dots \\
a_{21} & 0 & 0 & 0 & \dots  \\
a_{31} & 0 & 0 &0 &  \dots \\
\vdots & \vdots & \vdots & \vdots &\ddots
\end{pmatrix}
+
z^2
\begin{pmatrix}
 a_{12} & a_{13} & a_{14} & a_{15} & \dots \\
 a_{22} & 0 & 0 & 0 &\dots  \\
 a_{32} & 0 & 0 & 0 &\dots \\
 a_{42} & 0 & 0 & 0 & \dots \\
 \vdots & \vdots &\vdots &\vdots & \ddots
\end{pmatrix}
+ \cdots
\right).
\end{eqnarray*}
Therefore
\begin{eqnarray*}
 &&U^*M_{G_1^*+zG_2}U
\begin{pmatrix}
a_{00} & a_{01} & a_{02} & \dots \\
a_{10} & a_{11} & a_{12} &\dots  \\
a_{20} & a_{21} & a_{22} & \dots \\
\vdots & \vdots & \vdots & \ddots
\end{pmatrix}=
\\
&&
U^*
\left(
\begin{pmatrix}
0 & 0 & 0 & 0 & \dots \\
a_{00} & 0 & 0 & 0 & \dots \\
a_{10} & 0 & 0 & 0 & \dots  \\
a_{20} & 0 & 0 &0 &  \dots \\
\vdots & \vdots & \vdots & \vdots &\ddots
\end{pmatrix} +
z
\begin{pmatrix}
a_{01} & a_{02} & a_{03} & a_{04} & \dots \\
a_{11} & 0 & 0 & 0 & \dots \\
a_{21} & 0 & 0 & 0 & \dots  \\
a_{31} & 0 & 0 &0 &  \dots \\
\vdots & \vdots & \vdots & \vdots &\ddots
\end{pmatrix}
+
z^2
\begin{pmatrix}
 a_{12} & a_{13} & a_{14} & a_{15} & \dots \\
 a_{22} & 0 & 0 & 0 &\dots  \\
 a_{32} & 0 & 0 & 0 &\dots \\
 a_{42} & 0 & 0 & 0 & \dots \\
 \vdots & \vdots &\vdots &\vdots & \ddots
\end{pmatrix}
+ \cdots
\right)
 \\
&=&
\begin{pmatrix}
0 & 0 & 0 & 0 & \dots \\
a_{00} & a_{01} & a_{02} & a_{03} & \dots \\
a_{10} & a_{11} & a_{12} & a_{13} &\dots  \\
a_{20} & a_{21} & a_{22} & a_{23} &\dots \\
\vdots & \vdots & \vdots & \vdots &\ddots
\end{pmatrix}
= M_{z_1}
\begin{pmatrix}
a_{00} & a_{01} & a_{02} & \dots \\
a_{10} & a_{11} & a_{12} &\dots  \\
a_{20} & a_{21} & a_{22} & \dots \\
\vdots & \vdots & \vdots & \ddots
\end{pmatrix}.
\end{eqnarray*}
The proof of $M_{z_2}=U^*M_{G_2^*+zG_1}U$ is similar. Hence the proof.
\end{proof}
{\bf{Acknowledgement:}} The author is thankful to Professor T. Bhattacharyya for helpful suggestions and stimulating conversations.

\end{document}